\theoremstyle{definition}
\newtheorem{theorem}{Theorem}
\newtheorem{lemma}{Lemma}
\newtheorem{definition}{Definition}
\def\ind{\begin{picture}(9,8)
         \put(0,0){\line(1,0){9}}
         \put(3,0){\line(0,1){8}}
         \put(6,0){\line(0,1){8}}
         \end{picture}
        }
\newcommand{\Beta}{\text{Beta}}
\newcommand{\Binomial}{\text{Binomial}}
\newcommand{\Bern}{\text{Bern}}
\newcommand{\logit}{\text{logit}}
\newcommand{\Bin}{\text{Bin}}
\newcommand{\var}{\text{var}}
\newcommand{\cov}{\text{cov}}
\newcommand{\sumN}{\sum_{i=1}^N}
\newcommand{\CRD}{\textsc{CRD}}
\newcommand{\CRR}{\textsc{CRR}}
\newcommand{\COR}{\textsc{COR}}
\begin{document}

\title{\bf A Potential Tale of Two by Two Tables from Completely Randomized Experiments}
\author{Peng Ding and Tirthankar Dasgupta}
\date{}
\maketitle

\mbox{}
\vspace*{2in}
\begin{center}
\textbf{Authors' Footnote:}
\end{center}
Peng Ding is Ph.D. Candidate (E-mail: \texttt{pengding@fas.harvard.edu}), and Tirthankar Dasgupta is Associate Professor (E-mail: \texttt{dasgupta@stat.harvard.edu}), Department of Statistics, Harvard University, One Oxford Street, Cambridge 02138, MA.
The authors thank Professor Donald Rubin for pointing out Cornfield (1959) and Rubin (2012)'s discussion about experimental and observational studies, and also
thank the participants in Professor Rubin's seminar ``Research Topics in Missing Data, Matching and Causality'' at Harvard in Fall 2013, for their helpful comments.
We are also grateful to the reviewers and Associate Editor, the
comments from whom greatly improved the presentation of our paper.

\newpage
\begin{center}
\textbf{Abstract}
\end{center}

Causal inference in completely randomized treatment-control studies with binary outcomes is discussed from Fisherian, Neymanian and Bayesian perspectives, using the potential outcomes framework. A randomization-based justification of Fisher's exact test is provided. Arguing that the crucial assumption of constant causal effect is often unrealistic, and holds only for extreme cases, some new asymptotic and Bayesian inferential procedures are proposed. The proposed procedures exploit the intrinsic non-additivity of unit-level causal effects, can be applied to linear and non-linear estimands, and dominate the existing methods, as verified theoretically and also through simulation studies.

\bigskip
\noindent\textsc{Keywords}: {Causal inference; Constant causal effect; Potential outcome; Randomization-based inference; Sensitivity analysis}
\bigskip

\section{\bf Introduction}

The theory of causal inference from randomized treatment-control studies using the potential outcomes model has been well-developed over the past five decades and has been applied extensively to randomized experiments in the medical, behavioral and social sciences. The first formal notation for potential outcomes was introduced by Neyman (1923) in the development of randomization-based inference, and subsequently used by several researchers including Kempthorne (1952) and Cox (1958) for drawing causal inference from randomized experiments. The concept was formalized and extended by Rubin (1974, 1977, 1978) for other forms of causal inference from randomized experiments \emph{and} observational studies, and exposition of this transition appears in Rubin (2010). The three broad approaches to causal inference under the potential outcomes model are Fisherian, Neymanian and Bayesian.

The most common finite-population estimand in most causal inference problems is the average causal effect, defined as the finite-population average of unit-level causal effects. Since Neyman's (1923) seminal work, additivity of unit-level treatment effects (or its lack thereof) and its influence on the inference for the average causal effect has been investigated thoroughly for continuous outcomes. In comparison, few researchers (e.g., Copas 1973) have studied this problem for binary outcomes, in which the potential and observed outcomes can be summarized in the form of $2 \times 2$ contingency tables. In this paper, we provide a characterization of additivity based on the $2 \times 2$ table of potential outcomes, and use it to (i) justify Fisher's exact test from a randomization perspective, and (ii) propose an estimator of the variance of the average causal effect for binary outcomes that uniformly dominates the Neymanian variance estimator. As advocated by Rubin (1978), we also propose a Bayesian strategy for drawing inference about the average causal effect using the missing data perspective. Such a strategy is dependent on the assumptions related to model additivity, or more specifically, the nature and strength of the association between potential outcomes. We propose a novel sensitivity analysis which should help a practitioner understand how the analysis results might change if the assumptions are violated.

Apart from the average causal effect, other popular estimands for binary outcomes are the log of the causal risk ratio and the log of the causal odds ratio. Although of great practical interest, to the best of our knowledge, estimators of these causal measures have not been studied carefully from the Neymanian perspective, because unlike the average causal effect, non-linearity of these estimands and their estimators make exact variance calculations intractable. We circumvent this problem by taking an asymptotic perspective. By deriving asymptotic expressions for variances of these estimators, we explore the adequacy of the widespread practice of drawing statistical inference for such causal estimands on the basis of independent Binomial models, and propose improved methods that are justified by randomization. We conduct simulation studies under different settings to demonstrate the effectiveness of the proposed methods and also illustrate their application to a recent randomized controlled trial.

The paper is organized as follows. In the following section, we define the potential outcomes, the finite population estimands, the assignment mechanism and the observed outcomes. In Section 3, we discuss the Fisherian and Neymanian forms of inference for $2 \times 2$ tables. In Section 4, we propose a Bayesian framework for causal inference, explore its frequentists' properties and also propose a methodology for sensitivity analysis to assess the effect of violation of assumptions regarding additivity (or its lack thereof) on the inference. Causal inference for non-linear estimands is discussed in Section 5. A detailed simulation study is conducted in Section 6 to compare the different methods of inference and to demonstrate the superiority of the proposed methodology. Application of the proposed methodology to randomized experiments with binary outcomes is demonstrated with a real-life example in Section 7. Some concluding remarks are presented in Section 8.
Some technical details are in Appendix A, and the proofs, additional simulation studies and more details of the application are in the online Supplementary Materials.

\section{\bf Potential outcomes, estimands, and the observed data}
\label{sec::potential-outcomes-observed-data}

The evolution of the potential outcomes framework was motivated by the need for a clear separation between the object of interest (often referred to as the ``Science'') and what researchers do to learn about the Science (e.g., randomly assign treatments to units). We assume a finite population of $N$ experimental units that are exposed to a binary treatment $W$ and yield a binary response $Y$. Under the Stable Unit Treatment Value Assumption (Cox 1958; Rubin 1980), we define $Y_i(t)$ as the potential outcome for individual $i$ when exposed to treatment $t$ ($t=1$ and $t=0$ often refer to treatment and control, respectively). The $N\times 2$ matrix of the potential outcomes $\{(Y_i(1), Y_i(0)):i=1,\ldots,N\}$ is typically referred to as the Science (Rubin 2005). Because the response $Y$ is binary, the information contained in the Science can be condensed into a $2 \times 2$ table as shown in Table \ref{tb::potential}.

\begin{table}[ht]
\caption{``Science Table'' of the Potential Outcomes }\label{tb::potential}
\centering
\begin{tabular}{|c|cc|c|}
\hline
  &  $Y(0)=1$  & $Y(0)=0$  & row sum\\
\hline
$Y(1)=1$ & $N_{11} $  & $N_{10}$  & $N_{1+}$  \\
$Y(1)=0$ & $N_{01} $  & $N_{00} $ & $N_{0+}$ \\
\hline
column sum & $N_{+1}$ & $N_{+0}$ & $N$\\
\hline
\end{tabular}
\end{table}

\subsection{\bf Finite-population causal estimands and uniformity of unit-level causal effects} \label{ss::estimands}

Having defined the so-called Science, we now proceed to the definition of causal estimands. A unit-level causal effect is defined as a contrast between the potential outcomes under the treatment and the control, for example, $\tau_i = Y_i(1) - Y_i(0)$.
We define the finite population
average causal effect as
$$
\tau = \frac{1}{N} \sumN \tau_i  = p_1- p_0 ,
$$
where $p_t= \sumN Y_i(t)/N$ is the finite population average of $Y(t)$ for $t=0,1$. For binary outcomes, the average causal effect is also called the causal risk difference ($\CRD$). From Table \ref{tb::potential}, it follows that $$ \tau = N_{1+}/N - N_{+1}/N = \left( N_{10} - N_{01} \right) /N. $$ A measure of uniformity (or its lack, thereof) of the unit-level causal effects is the finite population variance of the individual causal effect $\tau_i$, given by
\begin{equation}
S_{\tau}^2 =  \frac{1}{N-1} \sumN \left(   \tau_i -  \tau  \right)^2 
= \frac{1}{N(N-1)}  \left\{   (N_{10} + N_{01})(N_{11}+N_{00}) + 4N_{10}N_{01}  \right\}.
 \label{eq::var_tau}
\end{equation}
Note that $S_{\tau}^2$ can also be represented as
\begin{equation}
S_{\tau}^2 = S_1^2 + S_0^2 - 2 S_{10}, \label{eq:var_tau2}
\end{equation}
where $S_t^2  =    \sumN \{ Y_i(t) - p_t  \}^2/(N-1)$ is the finite population variance of the potential outcome $Y_i(t)$,
and $S_{10} = \sumN \{ Y_i(1) - p_1 \} \{ Y_i(0) - p_0 \}/(N - 1) = (N_{11}N_{00} - N_{10}N_{01}) / \{ N(N-1)\}$ is the finite population covariance between $Y_i(1)$ and $Y_i(0)$.

Note that constant causal effect or additivity of unit-level causal effects implies that $S_1^2=S_0^2, S_{10}=S_1S_0$, and the uniformity measure $S_\tau^2=0$.
Copas (1973) considered a representation of the potential outcomes similar to that in Table \ref{tb::potential}, and defined parameters $\alpha = \tau$ as the treatment effect and $\beta = (N_{10} + N_{01})/N$ as a measure of ``the differential effect.'' However, we feel that $\beta$, which essentially equals $(\sum_{i=1}^N \tau_i^2 )/N$, is not an adequate representation of the differential effect, because it does not reduce to zero when all unit-level causal effects are equal to $1$ or $-1$. To discuss this aspect further, we consider the case of \emph{strict additivity} of treatment effects, where $\tau_i = \tau$ for all $i = 1, \ldots, N$, and summarize its impact on the Science and its summary measures $\tau$, $S_{\tau}^2$ and $\beta$ in Table \ref{tb::additivity}.

\begin{table}[ht]
\caption{Effect of additivity on the Science} \label{tb::additivity}
\centering
\begin{tabular}{|c|c|c|c|c|} \hline
$\tau (= \tau_i)$ & Entries of Table  \ref{tb::potential} & $\tau = \alpha$ & $S_{\tau}^2$ & $\beta$ \\ \hline
$1$              &  $N_{11}=N_{01}=N_{00}=0$, $N_{10}=N$ &    $1$            &  $0$           &   $1$     \\ \hline
$-1$             &  $N_{11}=N_{10}=N_{00}=0$, $N_{01}=N$ &    $-1$           &  $0$           &   $1$     \\ \hline
$0$              &  $N_{10}=N_{01}=0$, $N_{00}+N_{11}=N$  &    $0$            &  $0$           &   $0$     \\ \hline
\end{tabular}
\end{table}

Note that the last row of Table \ref{tb::additivity} represents a special case of additivity, with zero treatment effect for each unit. Such a hypothesis about the Science case is referred to as Fisher's sharp null hypothesis of no treatment effect, and forms the basis of the ``Fisherian'' inference described in Section \ref{sec::fisher}.

To sum up our discussion on the degree of uniformity of treatment effects, we define another condition referred to as \emph{monotonicity} (Angrist et al. 1996).

\begin{definition} \label{def::monotonicity}
The Science table is said to satisfy the monotonicity condition if either of the following two conditions hold:
(i) $Y_i(1) \ge Y_i(0)$ for all $i$ (or equivalently $N_{01}=0$), (ii) $Y_i(1) \le Y_i(0)$ for all $i$ (or equivalently $N_{10}=0$).
\end{definition}

Under monotonicity, we have $ \tau = \alpha = \beta = N_{10}/N, \;\ S_{\tau}^2 = N_{10} N_{11} / \{ N(N-1) \}$ if (i) holds, and $\tau = \alpha = -N_{01}/N, \;\ \beta = N_{01}/N, \;\ S_{\tau}^2 = N_{01} N_{11} / \{ N(N-1) \}$ if (ii) holds. We also note that any one of the three additivity conditions as described in Table \ref{tb::additivity} implies at least one of the monotonicity assumptions. We shall discuss the impact of strict additivity and monotonicity on the inference for $\tau$ in Section \ref{sec::neyman}.

\subsection{\bf Treatment assignment and the observed data} \label{ss::observed_outcomes}

We consider a completely randomized treatment assignment in which $N_1$ and $N_0$ units receive treatments 1 and 0 respectively. Let $\bm{W} = (W_1, \ldots, W_N)$ be the vector of treatment assignments and let $\bm{w}=(w_1,\ldots,w_N)$ be a realization of $\bm{W}$. Then, a completely randomized experiment satisfies $P(\bm{W} = \bm{w}) = N_1!N_0!/N!$ if $\sumN w_i = N_1$ and $P(\bm{W} = \bm{w})$ otherwise.
The observed outcomes are deterministic functions of both the treatment and the potential outcomes, since $ Y_i^{\text{obs}} = W_iY_i(1) + (1-W_i) Y_i(0) $.  Let $\bm{Y}^{\text{obs}}  = (Y_1^{\text{obs}}, \cdots, Y_N^{\text{obs}})$ be the vector of the observed outcomes. Since the treatment and the outcome are both binary, the observed data form a $2 \times 2 $ contingency table as shown in Table \ref{tb::obs}. The row sums in Table \ref{tb::obs}, $(N_1, N_0)$, are the numbers of individuals receiving treatment and control, and the column sums, $(n_{+1}, n_{+0})$, are the
number of individuals with outcomes $1$ and $0$, respectively.

\begin{table}[ht]
\centering
\caption{Summary of the Observed Data}\label{tb::obs}
\begin{tabular}{|c|cc|c|}
\hline
  &  $Y^{\text{obs}}=1$  & $Y^{\text{obs}}=0$ & row sum \\
\hline
$W=1$ & $n_{11} $  & $n_{10}$ & $N_1$\\
$W=0$ & $n_{01} $  & $n_{00} $ & $N_0$\\
\hline
column sum & $n_{+1}$ & $n_{+0}$ & $N$  \\
\hline
\end{tabular}
\end{table}

We conclude this section by emphasizing that the fundamental problem of causal inference is the missingness of one element of each pair $(Y_i(1), Y_i(0))$. Consequently, the key idea is to infer about the entries of Table \ref{tb::potential} (and the estimands that are functions of these unknown entries) using those of Table \ref{tb::obs} and the distribution of these entries under randomization.

\section{\bf Fisherian and Neymanian approaches to inference} \label{sec::fisher_neyman}

In this section, the potential outcomes of the finite population are assumed to be fixed numbers, and the randomness in the observed outcomes comes only from randomization of the treatment assignment (Neyman 1923; Rubin 1990). We discuss two forms of finite-population inference --- Fisherian and Neymanian --- under this set-up.
Fisher's form of randomization-based inference focuses on assessing the sharp null hypothesis of no treatment effect using the randomization distribution of a test statistic, which is obtained by imputing the missing outcomes under the sharp null.
Neyman's form of randomization-based inference can be viewed as drawing inferences by evaluating the expectations of statistics over the distribution induced by the assignment mechanism in order to calculate a confidence interval for the typical causal effect. Using asymptotic results is one way of achieving this.
In the following subsection (Section 3.1), we briefly discuss the Fisher randomization test and establish its connection to Fisher's exact test. In Section 3.2, we discuss Neymanian inference and propose an improvement over the traditional Neymanian estimator.

\subsection{\bf Fisherian randomization test and its connection to Fisher's exact test} \label{sec::fisher}

According to Fisher (1935a), randomization yields ``a reasoned basis for inference,'' and it allows for testing the sharp null hypothesis of zero individual causal effect, i.e., $Y_i(1) = Y_i(0)$ for $i = 1, \ldots, N$, characterized by the last row of Table \ref{tb::additivity}. Such a null hypothesis permits imputation of all the missing potential outcomes. Although in principle any test statistic can be used, the most natural one is $\widehat{\tau}  =  \widehat{p}_1 - \widehat{p}_0$, where $\widehat{p}_1 = \sumN W_i Y_i^{\text{obs}} /N_1 = n_{11}/N_1$, and $\widehat{p}_0 =  \sumN (1 - W_i)Y_i^{\text{obs}} / N_0 = n_{01}/N_0.$ The test statistic $\widehat{\tau}$ is a function of both the treatment assignment and the observed outcomes. Under the sharp null hypothesis, the randomness of $ \widehat{\tau} $ comes only from the randomization of the treatment assignment $\bm{W}$. The $p$-value under the sharp null is a measure of the extremeness of the observed value of the test statistic with respect to its randomization distribution under the sharp null. For a two-sided test, the $p$-value is typically defined as the proportion of values of $|\widehat{\tau}|$ generated under all possible randomizations that exceed its observed value $ |\widehat{\tau}^{\text{obs}}|$. In general, the null distribution of $ \widehat{\tau} $ and the $p$-value can either be calculated exactly, or approximated by Monte Carlo.

However, we can obtain the ``exact'' distribution of the randomization test statistic for a binary outcome.
In Table \ref{tb::obs}, the margins $N_1$ and $ N_0$ are fixed by design. Under the sharp null hypothesis, the margins $n_{+1}$ and $n_{+0}$ represent the number of observations with potential outcomes $Y_i(1)=Y_i(0)=1$ and $Y_i(1)=Y_i(0)=0$, respectively, and are equal to $N_{11}$ and $N_{00}$ in Table \ref{tb::potential}. It follows that
\begin{eqnarray}
\label{eq::tau}
\widehat{\tau} =   {n_{11}  \over N_1} - { n_{01} \over N_0} = { n_{11} \over N_1} - {  n_{+1} -  n_{11}    \over N_0}
= \frac{N}{N_1 N_0} n_{11} - \frac{  N_{11}  }{ N_0},
\end{eqnarray}
i.e., the test statistic $\widehat{\tau}$ is a monotone function of $n_{11}$. Therefore, the rejection region based on $\widehat{\tau}$ is equivalent to the rejection region based on $n_{11}$, which has the usual Hypergeometric null distribution of the exact test for a two by two contingency table.

More interestingly, any randomization test using statistics other than $\widehat{\tau}$ is also equivalent to the test based on $n_{11}$, since any test statistic is a function of $n_{11}$ under Fisher's sharp null hypothesis.
Numerically, the test has exactly the same form as Fisher's exact test, although the two tests were originally derived based on completely different statistical reasonings.
In observational studies under Multinomial or independent Binomial sampling, Fisher (1935b) justified his exact test for association as a conditional test, by arguing that the marginal counts are nearly ancillary.
However, it turns out that the marginal counts contain some information about the association (Chernoff 2004), and they are not ancillary.
Here, we give a justification of the validity for Fisher's exact test based on randomization, if the data truly come from a completely randomized experiment.
For more discussion about the hypothesis testing issue, see Berkson (1978), Yates (1984) and Chernoff (2004) for observational studies, and Ding (2014) for randomized experiments.

\subsection{\bf Neymanian inference for the average causal effect} \label{sec::neyman}

Neyman (1923) showed that $\widehat{\tau} = \widehat{p}_1 - \widehat{p}_0$ is unbiased for $\tau$, with the sampling variance
\begin{eqnarray}
\var(\widehat{\tau})
= \frac{N_0}{N_1N} S_1^2 + \frac{N_1}{N_0N}S_0^2 + \frac{2}{N}S_{10}
= \frac{S_1^2}{N_1} + \frac{S_0^2}{N_0} - \frac{S_{\tau}^2}{N}, \label{eq::var}
\end{eqnarray}
where $S_{\tau}^2$, $S_1^2$ and $S_0^2$ are defined in Section \ref{ss::estimands}. The proof can be found in Neyman (1923) or directly from Lemma A.2 in Appendix B.
Since the third term in (\ref{eq::var}), $ S_{\tau}^2 / N $, depends on the joint distribution of the potential outcomes, it is not identifiable from the observed data without further assumptions.
Because of this difficulty, Neyman (1923) proposed a ``conservative'' estimator for $\var(\widehat{\tau})$, defined as
\begin{eqnarray}
\widehat{V}_{Neyman} = {  s_1^2 \over N_1} +  { s_0^2 \over N_0},
\end{eqnarray}
where $ s_1^2 =   \sum_{W_i=1} (Y_i^{\text{obs}} - \widehat{p}_1)^2/(N_1 -1)$ and $ s_0^2 =  \sum_{W_i=0} (Y_i^{\text{obs}} - \widehat{p}_0 )^2/(N_0 -1)$ are the sample variances of the observed outcomes under the treatment and the control, respectively. For binary outcomes, the variance estimator can be simplified as
\begin{eqnarray}
\widehat{V}_{Neyman} &=&  \frac{1}{N_1(N_1 -1)} \left(n_{11} - N_1 \frac{n_{11}^2}{N_1^2}  \right)
+  \frac{1}{N_0(N_0 -1)} \left(n_{01} - N_0 \frac{n_{01}^2}{N_0^2}  \right) \nonumber  \\
&=& \frac{\widehat{p}_1 (1 - \widehat{p}_1)}{N_1 - 1} + \frac{\widehat{p}_0 (1 - \widehat{p}_0)}{N_0 - 1}. \label{eq::var-neyman}
\end{eqnarray}
As we will discuss later in Section \ref{subsec::binomial}, (\ref{eq::var-neyman}) is very close to the standard formula for the variance of the difference of sample proportions, except for the fact that the coefficient denominator in the latter are $N_w$ instead of $N_w-1$ for $w=0,1.$

The variance estimator $\widehat{V}_{Neyman}$ is ``conservative'' in the sense that it only unbiasedly estimates the first two terms of (\ref{eq::var}), $S_1^2/N_1 + S_0^2/N_0$, and therefore $E(\widehat{V}_{Neyman})  \geq \var(\widehat{\tau}),$ a fact pointed out by several authors, e.g., Gadbury (2001), who provided an expression for the bias of the estimator. The variance estimator $\widehat{V}_{Neyman}$ is unbiased for the true variance if and only if the individual causal effects are constant ($\tau_i=\tau$) or, equivalently, the conditions in Table \ref{tb::additivity} are satisfied. Neyman (1923)'s constant causal effect assumption is equivalent to using $0$ as a lower bound for $S_\tau^2$, which is not sharp for binary outcomes.
Consequently, Neyman's ``conservative'' variance estimator can be improved for binary outcomes, even if the potential outcomes are not strictly additive. The following result gives the sharp lower bound for $S_\tau^2/N$ in terms of $\tau$.

\begin{theorem}
\label{thm::bound-tau}
A lower bound for $S_\tau^2/N$ is
\begin{eqnarray}\label{eq::bound-s-tau}
\frac{S_\tau^2}{N} \geq \frac{|\tau| (1 - |\tau|)}{N-1},
\end{eqnarray}
and equality holds if and only if the potential outcomes satisfy the monotonicity condition as stated in Definition \ref{def::monotonicity}.
\end{theorem}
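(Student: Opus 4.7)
The plan is to reduce the inequality to an algebraic comparison in the four cell counts of Table~\ref{tb::potential}, using the explicit formulas already derived in the excerpt. From the displayed formula for $S_\tau^2$ and from $\tau=(N_{10}-N_{01})/N$, I would rewrite the target inequality
\[
\frac{(N_{10}+N_{01})(N_{11}+N_{00})+4N_{10}N_{01}}{N^2(N-1)}\;\geq\;\frac{|N_{10}-N_{01}|\,\bigl(N-|N_{10}-N_{01}|\bigr)}{N^2(N-1)},
\]
so the common positive denominator $N^2(N-1)$ drops out and what remains is a pure polynomial inequality in the nonnegative integers $N_{10},N_{01},N_{11},N_{00}$ summing to $N$.

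Next I would reduce to a two-variable problem. Setting $a=N_{10}$, $b=N_{01}$, and $c=N_{11}+N_{00}$, so that $N=a+b+c$, the target is
\[
(a+b)c + 4ab \;\geq\; |a-b|\bigl(2\min(a,b)+c\bigr).
\]
By the symmetry $N_{10}\leftrightarrow N_{01}$ (which corresponds to swapping the two monotonicity directions), I can assume without loss of generality that $a\geq b$, so $|a-b|=a-b$ and the right-hand side becomes $(a-b)(2b+c)$.

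The key computation is then to expand and simplify the difference. I expect
\[
(a+b)c+4ab-(a-b)(2b+c) \;=\; 2b(a+b+c) \;=\; 2bN,
\]
which is manifestly nonnegative and vanishes exactly when $b=0$. Combining with the analogous case $b\geq a$ (giving $2aN$), equality holds if and only if $\min(N_{10},N_{01})=0$, i.e., either $N_{10}=0$ or $N_{01}=0$, which is precisely the monotonicity condition of Definition~\ref{def::monotonicity}.

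The main obstacle is not conceptual but bookkeeping: getting the cancellation in the last step right after handling the absolute value by the WLOG reduction. Once the identity LHS$-$RHS$=2N\min(N_{10},N_{01})$ is verified, both the bound and the equality characterization follow immediately, so essentially all of the work is the algebraic expansion above.
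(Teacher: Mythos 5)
Your proof is correct: the key identity $(a+b)c+4ab-(a-b)(2b+c)=2b(a+b+c)$ checks out, so the gap between the two sides of the inequality is exactly $2N\min(N_{10},N_{01})/\{N^2(N-1)\}=2\min(p_{10},p_{01})/(N-1)$, which is nonnegative and vanishes precisely when $N_{10}=0$ or $N_{01}=0$, i.e., under monotonicity. The paper reaches the same conclusion by a slightly different route: it first rewrites $S_\tau^2/N=(\tau+2p_{01}-\tau^2)/(N-1)$ with $p_{jk}=N_{jk}/N$, and then minimizes $p_{01}$ as a linear program subject to the marginal constraints $p_{11}+p_{10}=p_1$, $p_{11}+p_{01}=p_0$, and nonnegativity, obtaining $p_{01}\geq\max(-\tau,0)$ and hence the bound, with the minimum attained at the vertices where $p_{10}=0$ or $p_{01}=0$. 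The two computations identify the identical slack term, but your direct expansion packages it as an exact algebraic identity, which makes both the inequality and the equality characterization immediate in one step; the paper's optimization framing instead makes explicit in what sense the bound is sharp (it is the minimum of $S_\tau^2$ over all joint tables compatible with the margins $p_1$ and $p_0$), a point the authors rely on when asserting the bound cannot be uniformly improved without further assumptions. Either argument is complete; yours trades the linear-programming vocabulary for a symmetry (WLOG) reduction and a one-line cancellation.
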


Theorem \ref{thm::bound-tau} implies that the monotonicity assumptions are the most ``conservative'' cases for variance estimation.  As shown in the proof of Theorem \ref{thm::bound-tau}, the lower bound (\ref{eq::bound-s-tau}) for $S_\tau^2$ is obtained via an optimization approach, which minimizes $S_\tau^2$ under the constraints of the marginal distributions $p_1$ and $p_0$. Therefore, the lower bound in (\ref{eq::bound-s-tau}) is ``sharp'', in the sense that it cannot be uniformly improved without further assumptions.

The lower bound for $S_\tau^2/N$ allows us to define the following estimator, which is an improvement over Neyman's variance estimator given by (\ref{eq::var-neyman}):
\begin{eqnarray}
\label{eq::improve-neyman-CRD}
\widehat{V}_{Neyman}^c =
\frac{\widehat{p}_1 (1 - \widehat{p}_1)}{N_1 - 1} + \frac{\widehat{p}_0 (1 - \widehat{p}_0)}{N_0 - 1}
- \frac{  |\widehat{\tau}| (1 - | \widehat{\tau} | )  }{N-1}.
\end{eqnarray}
This estimator cannot be larger than $ \widehat{V}_{Neyman}$, and is also an improvement of the variance estimator given in Robins (1988).
If $\tau\in \{1, -1, 0 \}$, i.e., if $S^2_{\tau}=0$, we have $|\tau|(1 - |\tau |) = 0$, and with large sample size, the adjusting term $  |\widehat{\tau}| (1 - | \widehat{\tau} | ) / (N-1) $ is of higher order relative to the two leading terms of the variance estimator (\ref{eq::improve-neyman-CRD}). Therefore, if $S^2_{\tau}=0$, then asymptotically, the adjusting term does not hurt, and $\widehat{V}_{Neyman}^c $ and $ \widehat{V}_{Neyman}$ are equivalent.
However, for small samples, we may under-estimate the true sampling variance due to the positive adjusting term $|\tau|(1 - |\tau |)/(N-1)$.
We will investigate this finite sample issue further in the simulation studies.
If the true average causal effect is not $-1, 0,$ or $1$, i.e., $S^2_{\tau} \ne 0$ the correction term $|\widehat{\tau}|(1-|\widehat{\tau}|)/(N-1)$ in the variance estimator cannot be asymptotically neglected, and
the ``adjusted'' variance estimator will improve Neyman (1923)'s original variance estimator.
For example, if we observed a two by two table with cell counts $(n_{11}^\text{obs}, n_{10}^\text{obs}, n_{01}^\text{obs}, n_{00}^\text{obs}) = (15,5,5,15)$, then we have $\widehat{p}_1 =0.75, \widehat{p}_0=0.25, \widehat{V}_{Neyman} = 0.020,$ and $ \widehat{V}_{Neyman}^c = 0.013$, with the latter variance estimator $32.48\%$ smaller than the former one.

\section{\bf Bayesian causal inference for binary outcomes}
\label{sec::bayes}
In this section, we adopt the Bayesian causal inference framework advocated by Rubin (1978).
We assume that all the potential outcomes are drawn from a hypothetical super-population, while we are still interested in making inference on the finite population average causal effect $\tau$.
Similar to Neymanian randomization inference, the association between the potential outcomes is also crucial for our Bayesian causal inference.
We first propose a Bayesian procedure based on a simple model with independent potential outcomes, and discuss its frequentists' repeated sampling property (Rubin 1984).
We then propose a sensitivity analysis procedure to investigate the impact of departures from the independence assumption on Bayesian inference.

\subsection{\bf Independent potential outcomes}
\label{sec::ind}

Assume that we have the following model with independent potential outcomes:
$$
Y_i(1)\sim \Bern(\pi_{1+}), \quad
Y_i(0)\sim \Bern(\pi_{+1}), \quad
Y_i(1)\ind Y_i(0), \quad
i=1, \cdots, N,
$$
where ``$\ind$'' denotes independence. The notation $(\pi_{1+}, \pi_{+1})$ is chosen to be coherent with the marginal probabilities in Table \ref{tb::model-potential} to be discussed later.
We will relax the independence assumption in Section \ref{sec::BSA}.
We postulate the following priors $\pi_{1+}\sim \Beta(\alpha_1, \beta_1), \pi_{+1}\sim \Beta(\alpha_0, \beta_0)$, and assume that they are independent {\it a priori}.

Since the treatment assignment mechanism is ignorable (Rubin 1978) in completely randomized experiments,
the joint posterior distribution of $\pi_{1+}$ and $\pi_{+1}$ is
\begin{eqnarray} \label{eq::post}
f(\pi_{1+}, \pi_{+1}\mid \bm{W}, \bm{Y}^{\text{obs}}) \propto
\pi_{1+}^{\alpha_1 - 1} (1 - \pi_{1+})^{\beta_1 - 1}  \pi_{+1}^{\alpha_0 - 1} (1 - \pi_{+1})^{\beta_0 - 1}
\pi_{1 + }^{n_{11}} (1 - \pi_{1+})^{n_{10}}  \pi_{+1}^{n_{01}} (1 - \pi_{+1})^{n_{00}},
\end{eqnarray}
or equivalently,
$\pi_{1+}|\bm{W}, \bm{Y}^{\text{obs}} \sim \Beta(n_{11} + \alpha_1, n_{10} + \beta_1), \pi_{+1}|\bm{W}, \bm{Y}^{\text{obs}} \sim \Beta(n_{01}+\alpha_0, n_{00} + \beta_0)$, and they are independent {\it a posteriori}.
After obtaining the posterior distribution of $(\pi_{1+}, \pi_{+1})$, we can impute all the missing potential outcomes, conditioning on $(\pi_{1+}, \pi_{+1})$.
If $W_i=1,$ we impute $Y_i(0)|\bm{W}, \bm{Y}^{\text{obs}}, \pi_{+1} \sim \Bern(\pi_{+1})$; and if $W_i = 0$, we impute $Y_i(1)|\bm{W}, \bm{Y}^{\text{obs}}, \pi_{1+} \sim \Bern(\pi_{1+})$.
Therefore, the posterior distribution of $\tau$ conditioning on $\pi_{1+}$ and $\pi_{+1}$ is
\begin{eqnarray}
\label{eq::post-tau}
\tau|\bm{W}, \bm{Y}^{\text{obs}}, \pi_{1+}, \pi_{+1} \sim  \frac{ n_{11} +B_0 - n_{01} -B_1 }{N},
\end{eqnarray}
where $B_1\sim \Binomial(N_1, \pi_{+1}), B_0 \sim \Binomial(N_0, \pi_{1+})$, and they are independent.
The description above also illustrates a Monte Carlo strategy for simulating the posterior distribution of $\tau.$
For theoretical comparison with Neymanian inference, we can also obtain the posterior mean and variance of $\tau$ as follows.
We give the exact formulae for posterior mean and variance in Appendix C online, and here for simplicity we give approximate formulae.

\begin{theorem}
\label{thm::post-mean-var}
Assume that the prior pseudo counts $(\alpha_0, \beta_0, \alpha_1, \beta_1)$ are small compared to $n_{ij}$'s.
The posterior mean of $\tau$ is
\begin{eqnarray*}
E(\tau \mid \bm{W}, \bm{Y}^{\text{obs}}) \approx  \widehat{\tau},
\end{eqnarray*}
and the posterior variance of $\tau$ is
\begin{eqnarray}\label{eq::post_var}
\var(\tau \mid \bm{W}, \bm{Y}^{\text{obs}})
\approx   \frac{N_0}{N}  \frac{\widehat{p}_1 (1 - \widehat{p}_1)}{N_1 - 1} + \frac{N_1 }{N} \frac{\widehat{p}_0 (1 - \widehat{p}_0)}{N_ 0 -1 } .
\end{eqnarray}
\end{theorem}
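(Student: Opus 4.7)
The plan is to compute the posterior mean and variance of $\tau$ by iterated conditioning on the parameters $(\pi_{1+},\pi_{+1})$, exploiting the conditional representation (\ref{eq::post-tau}) in which $B_0\sim \Binomial(N_0,\pi_{1+})$ and $B_1\sim \Binomial(N_1,\pi_{+1})$ are independent given the parameters, and the two Beta posteriors for $\pi_{1+}$ and $\pi_{+1}$ are themselves independent. The whole calculation then reduces to elementary Beta--Binomial bookkeeping, followed by dropping the pseudo-count terms $\alpha_t,\beta_t$ in favor of $n_{11},n_{10},n_{01},n_{00}$ under the small-prior assumption.

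For the mean, the inner step gives $E(\tau\mid \bm{W},\bm{Y}^{\text{obs}},\pi_{1+},\pi_{+1}) = (n_{11}+N_0\pi_{1+}-n_{01}-N_1\pi_{+1})/N$. Taking the outer expectation and using $E(\pi_{1+}\mid \bm{W},\bm{Y}^{\text{obs}})=(n_{11}+\alpha_1)/(N_1+\alpha_1+\beta_1)\approx \widehat{p}_1$, and symmetrically $E(\pi_{+1}\mid \bm{W},\bm{Y}^{\text{obs}})\approx \widehat{p}_0$, one sees $n_{11}+N_0\widehat{p}_1=N\widehat{p}_1$ and $n_{01}+N_1\widehat{p}_0=N\widehat{p}_0$ telescope, yielding $\widehat{\tau}$.

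For the variance, I would apply the law of total variance. The ``$E[\var]$'' piece equals $\{N_0\,E[\pi_{1+}(1-\pi_{1+})] + N_1\,E[\pi_{+1}(1-\pi_{+1})]\}/N^2$, while the ``$\var[E]$'' piece, using posterior independence of the two parameters, equals $\{N_0^2\var(\pi_{1+})+N_1^2\var(\pi_{+1})\}/N^2$. Substituting the identities $E[X(1-X)]=ab/\{(a+b)(a+b+1)\}$ and $\var(X)=ab/\{(a+b)^2(a+b+1)\}$ for $X\sim\Beta(a,b)$, with $a\approx n_{11}$, $b\approx n_{10}$, $a+b\approx N_1$, the treated block contributes $N_0 N_1\widehat{p}_1(1-\widehat{p}_1)/(N_1+1) + N_0^2\widehat{p}_1(1-\widehat{p}_1)/(N_1+1) = N_0 N\,\widehat{p}_1(1-\widehat{p}_1)/(N_1+1)$; dividing by $N^2$ gives $(N_0/N)\widehat{p}_1(1-\widehat{p}_1)/(N_1+1)$, which is asymptotically the first term of (\ref{eq::post_var}), and the symmetric argument handles the control block. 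The only real subtlety is noticing the cancellation $N_0 N_1 + N_0^2 = N_0 N$ that merges the conditional-variance and variance-of-conditional-mean contributions into a single clean factor of $\widehat{p}_1(1-\widehat{p}_1)$; everything else is straightforward algebra plus the small-prior approximation that replaces Beta parameters $(n_{11}+\alpha_1,\,n_{10}+\beta_1)$ by $(n_{11},n_{10})$ and the denominator $N_1+1$ by its asymptotic equivalent $N_1-1$ reported in the statement.
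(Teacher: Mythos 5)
Your proposal is correct and follows essentially the same route as the paper's proof: iterated conditioning on $(\pi_{1+},\pi_{+1})$, the law of total variance with the Beta posterior moments, the cancellation $N_0N_1+N_0^2=N_0N$, and the final small-prior/large-sample approximation. The only cosmetic difference is that you drop the pseudo-counts early, whereas the paper carries the exact primed quantities $N_w'$ and $\widehat{p}_w'$ through and approximates at the end.
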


From Theorem \ref{thm::post-mean-var}, we can see that the posterior variance of $\tau$ is smaller than Neyman's variance estimator.
These variances are different because
Neyman (1923) assumed perfect correlation between the potential outcomes, while the Bayesian model assumes independence between the potential outcomes.
As shown in (\ref{eq::var}), the assumption that $S_\tau^2 = 0$ is the worst case for the variance of $\var(\widehat{\tau})$, and Neyman (1923) adopted this as the most ``conservative'' estimator for the true variance.

\subsection{\bf Frequency evaluation of the Bayesian procedure under independence}

Going back to the finite population perspective, the sampling distribution of $\widehat{\tau}$ depends on the finite population covariance between $Y_i(1)$ and $Y_i(0)$, as shown in (\ref{eq::var}).
Assuming independence between $Y_i(1)$ and $Y_i(0)$, we have $S_{10} = 0$, and (\ref{eq::var}) becomes
$$
\var(\widehat{\tau}) = \frac{N_0}{N_1 N} S_1^2 + \frac{N_1}{N_0 N} S_0^2.
$$
The variance of $\widehat{\tau}$ can be unbiasedly estimated by
\begin{eqnarray}
\label{eq::var-ind}
\widehat{V}_{ind} =  \frac{N_0}{N_1 N} s_1^2 + \frac{N_1}{N_0 N} s_0^2
= \frac{N_0}{N}  \frac{\widehat{p}_1 (1 - \widehat{p}_1)}{N_1 - 1} + \frac{N_1 }{N} \frac{\widehat{p}_0 (1 - \widehat{p}_0)}{N_ 0 - 1 } .
\end{eqnarray}
The estimator of the sampling variance of $\widehat{\tau}$ in (\ref{eq::var-ind}) and the approximated posterior variance of $\tau$ in (\ref{eq::post_var}) under independence are the same.

Therefore, the Bayesian credible interval under independence will have a correct asymptotic coverage property, if the finite population covariance of the potential outcomes is zero.
However,
if the finite population covariance between $Y_i(1)$ and $Y_i(0)$, $S_{10}$, is negative, we have
$$
\var(\widehat{\tau}) < \frac{N_0}{N_1 N} S_1^2 + \frac{N_1}{N_0 N} S_0^2
$$
according to (\ref{eq::var}), which implies that the Bayesian credible interval will over-cover the truth over repeated sampling.
If the finite population covariance between $Y_i(1)$ and $Y_i(0)$, $S_{10}$, is positive, the Bayesian credible interval may not have a correct frequentists' coverage property.

\subsection{\bf Bayesian sensitivity analysis}
\label{sec::BSA}

The independence between potential outcomes may not be plausible even conditionally on observed covariates.
In particular, if the potential outcomes are positively correlated, the Bayesian credible interval may not have a correct frequentists' coverage property.
However, the observed data provide no information about the association between the two potential outcomes,
since they are never jointly observed.
Therefore, we propose a sensitivity analysis approach for the Bayesian model discussed above.

\begin{table}[ht]
\caption{Model of the Potential Outcomes}\label{tb::model-potential}
\centering
\begin{tabular}{|c|cc|c|}
\hline
  &  $Y(0)=1$  & $Y(0)=0$ &  row sum \\
\hline
$Y(1)=1$ & $\pi_{11} $  & $\pi_{10}$ & $\pi_{1+}$\\
$Y(1)=0$ & $\pi_{01} $  & $\pi_{00} $ & $1-\pi_{1+}$\\
\hline
column sum & $\pi_{+1}$ & $1-\pi_{+1}$ & $1$  \\
\hline
\end{tabular}
\end{table}

The joint distribution of $( Y_i(1), Y_i(0)   )$ follows a Multinomial distribution with parameters $(\pi_{11}, \pi_{10}, \pi_{01}, \pi_{00})$ as shown in Table \ref{tb::model-potential}, which can be equivalently characterized by the marginal distributions $(\pi_{1+}, \pi_{+1})$ and an association parameter.
We propose a new characterization of association between the potential outcomes in terms of the sensitivity parameter:
$$
\gamma =  \frac{P\{ Y(1)=1\mid Y(0) = 1 \} }{ P\{ Y(1) = 1\mid Y(0)=0\} }  = \frac{ \pi_{11}}{\pi_{10}}  \frac{1-\pi_{+1}}{\pi_{+1}} \in (0, \infty) .
$$
When the potential outcomes are independent, we have $\gamma =1$; when $\pi_{11}\rightarrow  0$, we have $\gamma \rightarrow 0$; when $\pi_{10}\rightarrow 0$, we have $\gamma \rightarrow \infty.$
In practice, we propose varying our sensitivity parameter $\gamma$ over a wide range of values, and performing Bayesian inference at each fixed value of $\gamma$.

There is a one-to-one mapping between $(\pi_{11}, \pi_{10}, \pi_{01}, \pi_{00})$ and $(\pi_{1+}, \pi_{+1}, \gamma)$, and thus the cell probabilities $\pi_{jk}$'s can be expressed as
\begin{eqnarray}
\pi_{11} = \frac{\gamma \pi_{1+} \pi_{+1}}{ 1 - \pi_{+1} + \gamma \pi_{+1} },&&
\pi_{10} =  \frac{  \pi_{1+}(1- \pi_{+1})   }{ 1 - \pi_{+1} + \gamma \pi_{+1} },\label{eq::cell1}\\
\pi_{01} = \pi_{+1} -  \frac{\gamma \pi_{1+} \pi_{+1}}{ 1 - \pi_{+1} + \gamma \pi_{+1} },&&
\pi_{00} = 1-\pi_{+1}-\pi_{1+}+\pi_{11}.\label{eq::cell2}
\end{eqnarray}
Since all the cell probabilities are within the interval $[0, 1]$, the equations in (\ref{eq::cell1}) and (\ref{eq::cell2}) impose the following restrictions on $(\pi_{1+}, \pi_{+1}, \gamma)$:
\begin{eqnarray}
\gamma (\pi_{1+} - \pi_{+1}) \leq 1 - \pi_{+1}, \quad
\gamma \pi_{+1} > \pi_{1+} + \pi_{+1} - 1.
\end{eqnarray}

The posterior distributions of $(\pi_{1+}, \pi_{+1})$ are the same as (\ref{eq::post}). However, the imputations of the missing potential outcomes are different from Section \ref{sec::ind}.
For $W_i = 1$, we impute
\begin{eqnarray*}
Y_i(0)|Y_i(1) = 1 &\sim & \Bern\left(   { \pi_{11} \over \pi_{1+}} =    \frac{\gamma \pi_{+1}}{ 1 - \pi_{+1} + \gamma \pi_{+1} }   \right),\\
Y_i(0)|Y_i(1) = 0 &\sim & \Bern\left(   {\pi_{01} \over 1-\pi_{1+}} =   { \pi_{+1} \over 1-\pi_{1+}} -  \frac{\gamma \pi_{1+} \pi_{+1}}{ (1 - \pi_{+1} + \gamma \pi_{+1})(1-\pi_{1+}) }\right).
\end{eqnarray*}
For $W_i = 0$, we impute
\begin{eqnarray*}
Y_i(1)|Y_i(0)=1 &\sim & \Bern\left(     {\pi_{11} \over \pi_{+1}  } =    \frac{\gamma \pi_{1+}}{ 1 - \pi_{+1} + \gamma \pi_{+1} }     \right),\\
Y_i(1)|Y_i(0)=0 &\sim & \Bern\left(    {\pi_{10} \over 1-\pi_{+1}} = \frac{   \pi_{1+}    }{ 1 - \pi_{+1} + \gamma \pi_{+1} }  \right).
\end{eqnarray*}

\begin{figure}[hbt]
\centering
\includegraphics[width = 0.8 \textwidth]{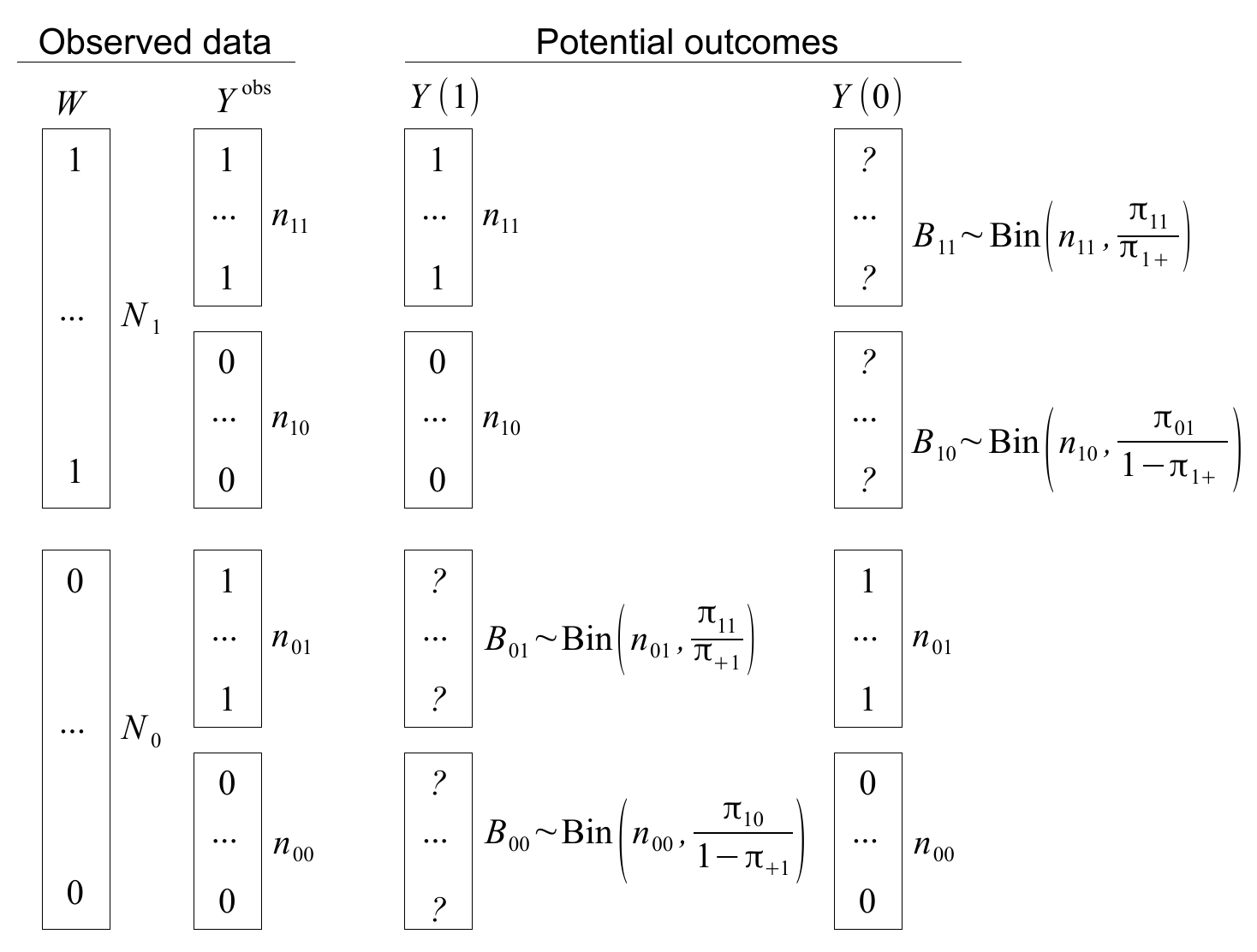}
\caption{Imputation of the Missing Potential Outcomes} \label{fg::impute}
\end{figure}

We illustrate the strategy for imputing missing potential outcomes in Figure \ref{fg::impute}, in which we have
$$
\tau| \bm{W}, \bm{Y}^{\text{obs}} , \pi_{1+}, \pi_{+1}\sim \frac{ n_{11} + B_{01} + B_{00} - B_{11} - B_{10} - n_{01} }{N},
$$
where $B_{11}\sim \Binomial(n_{11}, \pi_{11}/\pi_{1+})$, $B_{10}\sim \Binomial \left\{  n_{10}, \pi_{01}/(1 - \pi_{1+}) \right\} $,$ B_{01}\sim \Binomial(n_{01}, \pi_{11}/\pi_{+1})$, $B_{00}\sim \Binomial \left\{  n_{00}, \pi_{10}/(1-\pi_{+1}) \right\} $, and $\{ B_{11}, B_{10}, B_{01}, B_{00} \}$ are independent. Note that although the posterior distribution of $(\pi_{1+}, \pi_{+1})$ does not depend on the association parameter $\gamma$, the posterior distribution of $\tau$ does.
While there is no explicit form of the posterior distribution of $\tau$, we can approximate it via Monte Carlo.
We will apply the proposed sensitivity analysis
in Section \ref{sec::application}.

\section{\bf General causal measures}
\label{sec::general}

Up to now, we have considered the most commonly used causal estimand, the average causal effect (or $\CRD$). However, researchers and practitioners are also often
interested in the log of the causal risk ratio (relative risk)
\begin{eqnarray}
\log (\CRR) = \log (p_1) - \log (p_0) = \log\left(   \frac{ N_{11}+N_{10}   }{  N_{11}+N_{01}  }   \right) , \label{eq::log-CRR}
\end{eqnarray}
and
the log of the causal odds ratio
\begin{eqnarray}
\log (\COR )= \logit (p_1) - \logit (p_0) = \log\left( \frac{N_{11}+N_{10}}{ N_{01}+N_{00} }    \right)
- \log\left(   \frac{   N_{11}+N_{01} }{ N_{10}+N_{00} }   \right) , \label{eq::log-COR}
\end{eqnarray}
where logit$(x) = \log(x) - \log(1-x)$.
One attractive feature of $\CRD$ is that it is linear in the individual causal effects.
On the contrary, $\log (\CRR) $ and $\log(\COR)$ are finite population level causal estimands, which are not simple averages of individual causal effects.
The linearity of the average causal effect permitted Neyman (1923) to obtain an unbiased estimator with exact variance.
However, the elegant mathematics of Neyman (1923)'s randomization inference for $\CRD$ is not directly applicable to nonlinear causal measures.
We will fill in the gap by obtaining asymptotic randomization inference for $\log(\CRR)$ and $\log(\COR)$.

We can obtain estimators for the log of the causal risk ratio and odds ratio by substituting estimators of $p_1$ and $p_0$ in (\ref{eq::log-CRR}) and (\ref{eq::log-COR}), i.e.,
$\log (\widehat{\CRR} )= \log (\widehat{p}_1) - \log (\widehat{p}_0)$ and $\log (\widehat{\COR} )= \logit (\widehat{p}_1) - \logit (\widehat{p}_0)$.
As mentioned earlier, general nonlinear causal measures have not been studied carefully from the Neymanian perspective,
because the absence of linearity makes exact variance calculations intractable for such measures. Instead, we take an asymptotic perspective in this section. In the following subsections,
we will (i) propose asymptotic randomization inference for $\log (\widehat{\CRR} )$ and $\log (\widehat{\COR} )$; (ii) compare them with the results under traditional independent Binomial models; (iii) discuss Bayesian inference for the general causal measures.

\subsection{\bf Neymanian asymptotic randomization inference}

Unfortunately, the unbiasedness is not preserved by plugging $\widehat{p}_1$ and $\widehat{p}_0$ into the nonlinear functions (\ref{eq::log-CRR}) and (\ref{eq::log-COR}).
Furthermore, the plug-in estimators do not have finite means or variances,
since $\widehat{p}_1$ and $\widehat{p}_0$ can equal to $0$ or $1$ with positive probabilities.
In spite of these limitations, when $p_1$ and $p_0$ are both bounded away from $0$ and $1$, the estimators
$\log (\widehat{\CRR} )$ and $\log (\widehat{\COR} )$ have regular asymptotic distributions, summarized in the following two theorems.

\begin{theorem}
\label{thm::logCRR}
If $0<p_0,p_1<1$, as $N\rightarrow \infty$, $\log(\widehat{ \CRR} )$ is consistent for $\log(\CRR)$ and asymptotically Normal with asymptotic variance
\begin{eqnarray}
\frac{N_1 p_1 + N_0 p_0}{ N p_1 p_0 } \left(
\frac{S_1^2}{N_1 p_1}
+
\frac{S_0^2}{N_0 p_0}
- \frac{S_\tau^2}{N_1 p_1 + N_0 p_0}
\right).
\label{eq::asymptotic-variance-logCRR}
\end{eqnarray}
Assuming $S_\tau^2 = 0$ as in Neyman (1923), we can estimate the asymptotic variance by
\begin{eqnarray} \label{eq::var-logCRR-est}
\widehat{V}_{\CRR}
= \frac{n_{10}}{n_{11} N_1 } \frac{( n_{11} + n_{01})N_0 }{ n_{01} N} + \frac{n_{00}}{n_{01} N_0   }  \frac{(n_{11} + n_{01})  N_1 }{  n_{11} N } .
\end{eqnarray}
\end{theorem}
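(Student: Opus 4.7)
The plan is to apply a finite-population delta method. Because $0 < p_1, p_0 < 1$ by hypothesis, the map $g(x,y) = \log x - \log y$ is continuously differentiable in a neighborhood of $(p_1, p_0)$, so it suffices to (i) establish joint asymptotic normality of $\sqrt{N}(\widehat{p}_1 - p_1, \widehat{p}_0 - p_0)^{\top}$ under complete randomization, and (ii) compute the limiting covariance matrix from the exact second moments of $(\widehat{p}_1, \widehat{p}_0)$. Consistency $\widehat{p}_t \to p_t$ is immediate from $\var(\widehat{p}_t) = O(1/N)$ under the stated boundedness, and joint asymptotic normality follows from Hajek's (1960) finite-population central limit theorem applied to the random assignment vector $\bm{W}$, since both $\widehat{p}_1$ and $\widehat{p}_0$ are linear in $\bm{W}$.

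For the covariance structure I use the simple-random-sample formulas
\[
\var(\widehat{p}_1) = \frac{N_0}{N}\cdot\frac{S_1^2}{N_1}, \qquad \var(\widehat{p}_0) = \frac{N_1}{N}\cdot\frac{S_0^2}{N_0},
\]
together with $\cov(\widehat{p}_1, \widehat{p}_0) = -S_{10}/N$, which I can extract from the Neyman variance (\ref{eq::var}) via $\var(\widehat{\tau}) = \var(\widehat{p}_1) + \var(\widehat{p}_0) - 2\cov(\widehat{p}_1, \widehat{p}_0)$. The delta-method linearization
\[
\log \widehat{\CRR} - \log \CRR \approx \frac{\widehat{p}_1 - p_1}{p_1} - \frac{\widehat{p}_0 - p_0}{p_0}
\]
then has asymptotic variance $\var(\widehat{p}_1)/p_1^2 + \var(\widehat{p}_0)/p_0^2 - 2\cov(\widehat{p}_1,\widehat{p}_0)/(p_1 p_0)$. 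Substituting the above and eliminating $S_{10}$ via $2S_{10} = S_1^2 + S_0^2 - S_\tau^2$ from (\ref{eq:var_tau2}), a short rearrangement groups the $S_1^2$ and $S_0^2$ contributions over the common factor $(N_1 p_1 + N_0 p_0)/(N p_1 p_0)$ and produces exactly (\ref{eq::asymptotic-variance-logCRR}).

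For the variance estimator $\widehat{V}_{\CRR}$, I impose the Neymanian working assumption $S_\tau^2 = 0$, drop the third term of (\ref{eq::asymptotic-variance-logCRR}), and plug in sample counterparts: $\widehat{p}_t$ for $p_t$ and, for binary data, $s_t^2 \approx \widehat{p}_t(1-\widehat{p}_t)$ for $S_t^2$. Using $N_1 \widehat{p}_1 + N_0 \widehat{p}_0 = n_{11} + n_{01}$ together with $N_1(1-\widehat{p}_1) = n_{10}$ and $N_0(1-\widehat{p}_0) = n_{00}$ converts the resulting expression into the cell-count form (\ref{eq::var-logCRR-est}); its consistency then follows from the continuous mapping theorem and from consistency of each $\widehat{p}_t$.

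The principal technical nuisance is that $\log \widehat{p}_1$ and $\log \widehat{p}_0$ are undefined on the event $\{n_{11} = 0\} \cup \{n_{01} = 0\}$, so $\log\widehat{\CRR}$ has no finite moments and the statement is purely about convergence in distribution. The remedy is to show this exceptional event has probability $o(1)$ (using concentration of $\widehat{p}_t$ around $p_t > 0$), truncate $\log \widehat{\CRR}$ on its complement, and argue the truncation is asymptotically negligible. Everything else is routine bookkeeping with finite-population moment formulas from Lemma A.2.
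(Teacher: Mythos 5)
Your proposal is correct and follows essentially the same route as the paper's proof: a first-order Taylor (delta-method) expansion of $\log\widehat{p}_1 - \log\widehat{p}_0$, the finite-population moment formulas $\var(\widehat{p}_1)=N_0S_1^2/(N_1N)$, $\var(\widehat{p}_0)=N_1S_0^2/(N_0N)$, $\cov(\widehat{p}_1,\widehat{p}_0)=-S_{10}/N$ together with the identity $2S_{10}=S_1^2+S_0^2-S_\tau^2$, and then substitution of $s_t^2$ and $\widehat{p}_t$ (ignoring the $N_w$ versus $N_w-1$ distinction) to reach the cell-count form of $\widehat{V}_{\CRR}$. Your additional remarks on the zero-cell event and the explicit appeal to the finite-population CLT are refinements the paper leaves implicit, not a different argument.
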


\begin{theorem}
\label{thm::logCOR}
If $0<p_0,p_1<1$, as $N\rightarrow \infty$, $\log(\widehat{ \COR} )$ is consistent for $\log(\COR)$ and asymptotically Normal with asymptotic variance
\begin{eqnarray}
 \frac{   N_1 p_1 (1 - p_1) + N_0 p_0 (1 - p_0)   }{Np_1(1-p_1)p_0 (1 - p_0)} \left\{
\frac{  S_1^2  }{N_1 p_1 (1 - p_1)}
+
\frac{ S_0^2  }{N_0 p_0 (1 - p_0)}
- \frac{ S_\tau^2}{  N_1 p_1 (1 - p_1) + N_0 p_0 (1 - p_0)     }
\right\}.
\label{eq::asymptotic-variance-logCOR}
\end{eqnarray}
Assuming $S_\tau^2 = 0$ as in Neyman (1923), we can estimate the asymptotic variance by
\begin{eqnarray}\label{eq::var-logCOR-est}
\widehat{V}_{\COR}
=  \frac{1}{n_{11}} + \frac{1}{n_{10}} + \frac{1}{n_{01}} + \frac{1}{n_{00}}.
   \end{eqnarray}
\end{theorem}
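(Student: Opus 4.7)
The plan is to mimic the argument for Theorem~\ref{thm::logCRR} but with the logit transformation in place of $\log$. The estimator $\log(\widehat{\COR}) = \logit(\widehat{p}_1) - \logit(\widehat{p}_0)$ is a smooth function of $(\widehat{p}_1, \widehat{p}_0)$, so the natural route is: (i) establish joint asymptotic normality of $(\widehat{p}_1, \widehat{p}_0)$, (ii) invoke the delta method, and (iii) verify that the resulting variance matches the displayed formula.

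First I would show that $\widehat{p}_1 \to p_1$ and $\widehat{p}_0 \to p_0$ in probability under the completely randomized design, and that $(\widehat{p}_1, \widehat{p}_0)$ is jointly asymptotically Normal via the finite-population CLT of Hajek (1960) / Lehmann (1998) cited after equation~(\ref{eq::var}). The mean vector is $(p_1, p_0)$, and the entries of the asymptotic covariance matrix are
\begin{equation*}
\var(\widehat{p}_1) = \frac{N_0}{NN_1}S_1^2, \quad
\var(\widehat{p}_0) = \frac{N_1}{NN_0}S_0^2, \quad
\cov(\widehat{p}_1, \widehat{p}_0) = -\frac{S_{10}}{N},
\end{equation*}
which follow from the standard SRS covariance identities for complete randomization and can be read off from the decomposition $\var(\widehat\tau) = \var(\widehat{p}_1) + \var(\widehat{p}_0) - 2\cov(\widehat{p}_1,\widehat{p}_0)$ together with (\ref{eq::var}). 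Since $0 < p_0, p_1 < 1$, the function $g(x) = \logit(x)$ is $C^1$ at $(p_1, p_0)$ with $g'(x) = 1/\{x(1-x)\}$, so the continuous mapping theorem delivers consistency of $\log(\widehat\COR)$ for $\log(\COR)$, and the delta method yields asymptotic normality with variance
\begin{equation*}
\frac{1}{\{p_1(1-p_1)\}^2}\var(\widehat{p}_1) + \frac{1}{\{p_0(1-p_0)\}^2}\var(\widehat{p}_0) - \frac{2}{p_1(1-p_1)p_0(1-p_0)}\cov(\widehat{p}_1, \widehat{p}_0).
\end{equation*}

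Next I would plug in the three covariances above and use the identity $S_\tau^2 = S_1^2 + S_0^2 - 2S_{10}$ from (\ref{eq:var_tau2}) to eliminate $S_{10}$, then regroup terms by factoring out $\{Np_1(1-p_1)p_0(1-p_0)\}^{-1}$ and collecting the coefficients of $S_1^2$ and $S_0^2$ into $(N_1 p_1(1-p_1) + N_0 p_0(1-p_0))$. This is purely algebraic bookkeeping, entirely parallel to the CRR computation, and yields (\ref{eq::asymptotic-variance-logCOR}). For the variance estimator, I would set $S_\tau^2 = 0$ in the limiting expression and substitute the natural sample analogs $\widehat p_1(1-\widehat p_1) = n_{11}n_{10}/N_1^2$, $\widehat p_0(1-\widehat p_0) = n_{01}n_{00}/N_0^2$, together with $s_1^2 \approx \widehat p_1(1-\widehat p_1)$ and $s_0^2 \approx \widehat p_0(1-\widehat p_0)$ (valid up to a $1+O(N_1^{-1})$ factor for binary outcomes). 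The resulting expression simplifies to $N_1/(n_{11}n_{10}) + N_0/(n_{01}n_{00})$, and using $N_1 = n_{11}+n_{10}$ and $N_0 = n_{01}+n_{00}$ breaks each ratio into a pair of reciprocals, giving precisely (\ref{eq::var-logCOR-est}).

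The main obstacle is verifying the applicability of the finite-population CLT: one needs the usual Lindeberg-type condition on the potential outcomes and non-degenerate limiting proportions $N_1/N \to c \in (0,1)$ and $p_1, p_0 \in (0,1)$, together with $\liminf S_1^2, \liminf S_0^2 > 0$ so that the denominators in the delta-method variance stay bounded away from zero. The remaining steps—two Taylor expansions and an algebraic rearrangement using $S_\tau^2 = S_1^2 + S_0^2 - 2S_{10}$—are routine, and the reduction of $\widehat V_{\COR}$ to $\sum 1/n_{jk}$ is a short identity once the sample variances are written in closed form.
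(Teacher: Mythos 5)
Your proposal is correct and follows essentially the same route as the paper's proof: a first-order Taylor (delta-method) expansion of $\logit(\widehat{p}_1)-\logit(\widehat{p}_0)$ around $(p_1,p_0)$, the randomization-based moments $\var(\widehat{p}_1)=N_0S_1^2/(N_1N)$, $\var(\widehat{p}_0)=N_1S_0^2/(N_0N)$, $\cov(\widehat{p}_1,\widehat{p}_0)=-S_{10}/N$ combined with $2S_{10}=S_1^2+S_0^2-S_\tau^2$ to eliminate $S_{10}$, and then the substitution $s_w^2=N_w\widehat{p}_w(1-\widehat{p}_w)/(N_w-1)$ with the $N_w$ versus $N_w-1$ approximation to reduce the estimator to $\sum_{j,k}1/n_{jk}$. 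The only addition is your explicit attention to the finite-population CLT regularity conditions, which the paper leaves implicit.
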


The variance formulae (\ref{eq::asymptotic-variance-logCRR}) and (\ref{eq::asymptotic-variance-logCOR}) for $ \log ( \widehat{\CRR}  )$ and $ \log ( \widehat{\COR}  ) $ are similar to the variance formula (\ref{eq::var}) for $\widehat{\tau}$, depending on the finite population variances of the potential outcomes $S_1^2$ and $ S_0^2$, and the unidentifiable finite population variance of the individual causal effect $S_\tau^2$.

Furthermore, borrowing the idea of bias-correction for ratio estimators (Cochran 1977), we can obtain bias-corrected estimators for $\log(\CRR)$ and $\log(\COR)$, which have lower order asymptotic biases than the na\"ive moment estimators.
Similar to Neyman's variance estimator for $\var(\widehat{\tau})$,
the variance estimators in (\ref{eq::var-logCRR-est}) and (\ref{eq::var-logCOR-est}) are conservative unless the constant causal effects assumption holds. Analogous to the result in (\ref{eq::improve-neyman-CRD}) for $\CRD$, using the lower bound for $S_\tau^2$ in Theorem \ref{thm::bound-tau}, we can improve the variance estimators (\ref{eq::var-logCRR-est}) and (\ref{eq::var-logCOR-est}) for the bias corrected estimators for $\log(\CRR)$ and $\log(\COR)$.
These bias-corrected point estimators and improved variance estimators improve the moment-based Neymanian inference asymptotically, and we call them improved Neymanian inference hereinafter.
We provide technical details about bias and variance reduction in Appendix A with proofs in the Supplementary Materials.

\subsection{\bf Independent Binomial models versus Neymanian inference}
\label{subsec::binomial}

In current clinical practice, the following independent Binomial models are widely used:
\begin{eqnarray}
\label{eq::Binomial}
n_{11}\sim \Binomial(N_1, p_1), \quad
n_{01}\sim \Binomial(N_0, p_0), \quad
n_{11}\ind n_{01}.
\end{eqnarray}
In the model above, $n_{11}$ and $n_{01}$ are assumed to be Binomial random variables. Such an assumption cannot, however, be justified by randomization using the potential outcomes model.

The maximum likelihood estimators for $p_1 - p_0, \log(p_1) - \log(p_0), \logit(p_1) - \logit(p_0)$ are the same as
$\widehat{\tau},   \log (\widehat{\CRR}), \log (\widehat{\COR})$, and their asymptotic variances (Woolf 1955; Rothman et al. 2008) can be estimated by
\begin{eqnarray}
\widehat{V}_{\CRD}^\Bin &=& {\widehat{p}_1(1 - \widehat{p}_1) \over N_1 }
+{  \widehat{p}_0 (1 - \widehat{p}_0) \over N_0}  , \label{eq::var-CRD-binomial} \\
\widehat{V}_{\CRR}^\Bin &=&
\frac{1}{n_{11}} - \frac{1}{N_1} + \frac{1}{n_{01}} - \frac{1}{N_0}
=
\frac{n_{10}}{n_{11} N_1} + \frac{n_{00}}{n_{01} N_0 } , \label{eq::var-logCRR-binomial} \\
\widehat{V}_{\COR}^\Bin &=&  \frac{1}{n_{11}} + \frac{1}{n_{10}} + \frac{1}{n_{01}} + \frac{1}{n_{00}}.\label{eq::var-logCOR-binomial}
\end{eqnarray}

Here, the superscript ``$\Bin$'' is for ``Binomial'' models.
For $\CRD$ and $\log ( \COR )$, the estimated variances under independent Binomial models are the same as Neymanian inference assuming constant causal effects.
However, this does not hold for $\log ( \CRR) $.
One sufficient condition for the equivalence of the variances from Neymanian inference and independent Binomial models is
$$
\frac{N_1}{N_0} = \frac{n_{11}}{n_{01}}, \text{ or equivalently, } \widehat{p}_1 = \widehat{p}_0,
$$
which essentially assumes the null hypothesis of zero average causal effect.

However, all the conclusions here are based on the constant causal effects assumption which may not be realistic in applications with binary outcomes.
Without assuming constant causal effects and by using the new sharp bound for $S_\tau^2$ in (\ref{eq::bound-s-tau}), we obtain different results from independent Binomial models, as shown in Appendix A. One surprising property of the log odds ratio is that the variance estimator under independent Binomial models (\ref{eq::var-logCOR-binomial}) is symmetric with respect to treatment and outcome, which coincides with the randomization-based variance estimator (\ref{eq::var-logCOR-est}) assuming $S_\tau^2=0$. However, the true variance of $\log(\widehat{\COR})$ over all possible randomizations, (\ref{eq::asymptotic-variance-logCOR}), and the improved variance estimator in Appendix A, (\ref{eq::var-logCOR-neyman}), do not have this symmetry.

\subsection{\bf Bayesian inference for general causal measures}

As shown above, Neymanian randomization inference for nonlinear measures of causal effects involves tedious algebra, and relies
on asymptotics under regularity conditions. In contrast, the Bayesian inference for $\log(\CRR)$ and $\log(\COR)$ is quite natural, once we impute all the missing potential outcomes based on their posterior predictive distributions.

For example, under the independent potential outcomes model, we have
\begin{eqnarray*}
\log (\CRR) |\bm{W}, \bm{Y}^{\text{obs}}, \pi_{1+}, \pi_{+1}& \sim&
\log \left(    \frac{ n_{11} + B_0 }{ n_{01} + B_1 } \right)  , \\
\log (\COR) |\bm{W}, \bm{Y}^{\text{obs}}, \pi_{1+}, \pi_{+1} &\sim&
\log\left(   \frac{n_{11} + B_0}{N - n_{11} -  B_0} \right)
- \log\left(     \frac{n_{01} + B_1}{N - n_{01} - B_1} \right) .
\end{eqnarray*}
Also, we can apply the Bayesian sensitivity analysis technique, similar to Section \ref{sec::BSA}, and obtain
\begin{eqnarray*}
\log (\CRR) |\bm{W}, \bm{Y}^{\text{obs}}, \pi_{1+}, \pi_{+1} &\sim&
\log \left(   \frac{n_{11} + B_{01} + B_{00} }{ n_{01} + B_{11} + B_{10} } \right)  ,\\
\log (\COR) |\bm{W}, \bm{Y}^{\text{obs}}, \pi_{1+}, \pi_{+1} &\sim&
\log\left(   \frac{n_{11} + B_{01} + B_{00} }{N - n_{11} -  B_{01}  - B_{00}  } \right)
- \log\left(     \frac{n_{01} + B_{11} - B_{10}  }{N - n_{01} - B_{11} - B_{10}} \right) .
\end{eqnarray*}
The posterior distributions of these causal measures can then be approximated by Monte Carlo.

\section{\bf Simulation Studies}

In order to compare the finite sample properties of Neyman's original method, the modified Neyman's method, and the Bayesian method assuming independent potential outcomes, we conduct two sets of simulation studies with independent and positively associated potential outcomes.
The first set listed as Cases 1--5 in Table \ref{tb::simulation} represent independent potential outcomes, while those listed as Cases 6--12 represent positively associated potential outcomes.
Table \ref{tb::simulation} also shows the marginal variances, correlations of the potential outcomes, and causal measures for each set of potential outcomes. To save space in the main text,
we present only the results for $\CRD$ and $\log(\COR)$. The results for $\log(\CRR)$ and the simulation studies for negatively associated potential outcomes are discussed in the online Supplementary Materials.

\begin{table}[ht]
\centering
\caption{``Science table'' for the simulation studies}\label{tb::simulation}
\begin{tabular}{|c|cccc|cccc|ccc|}
\hline
Case & $N_{11}$ & $N_{10}$ & $N_{01}$ & $N_{00}$ & $S_1^2$ & $S_0^2$ & $S_{10}$ &  $S_\tau^2$ & $\tau$ & $\log(\CRR)$ & $\log(\COR)$ \\
\hline
1&50&50&50&50&0.251 & 0.251 & 0.000 & 0.503 & 0.000 & 0.000 & 0.000 \\
2&30&70&30&70&0.251 & 0.211 & 0.000 & 0.462 & 0.200 & 0.511 & 0.847 \\
3&30&90&20&60&0.241 & 0.188 & 0.000 & 0.430 & 0.350 & 0.875 & 1.504 \\
4&80&20&80&20&0.251 & 0.161 & 0.000 & 0.412 & -0.300 & -0.470 & -1.386 \\
5&60&20&90&30&0.241 & 0.188 & 0.000 & 0.430 & -0.350 & -0.629 & -1.504 \\
\hline
6&60&40&40&60&0.251 & 0.251 & 0.050 & 0.402 & 0.000 & 0.000 & 0.000 \\
7&50&50&30&70&0.251 & 0.241 & 0.050 & 0.392 & 0.100 & 0.223 & 0.405 \\
8&50&70&30&50& 0.241 & 0.241 & 0.010 & 0.462 & 0.200 & 0.405 & 0.811 \\
9&40&110&10&40&0.188 & 0.188 & 0.013 & 0.352 & 0.500 & 1.099 & 2.197 \\
10&70&30&50&50&0.251 & 0.241 & 0.050 & 0.392 & -0.100 & -0.182 & -0.405 \\
11&50&30&70&50&0.241 & 0.241 & 0.010 & 0.462 & -0.200 & -0.405 & -0.811 \\
12&30&10&110&50& 0.161 & 0.211 & 0.010 & 0.352 & -0.500 & -1.253 & -2.234 \\
\hline
\end{tabular}
\end{table}

For given potential outcomes, we draw, repeatedly and independently, the treatment assignment vectors $5000$ times,
obtain the observed outcomes, and then apply three methods: Neymanian inference assuming constant treatment effects, improved Neymanian inference, and Bayesian inference assuming independent potential outcomes.
The improved Neymanian inference means using the improved variance estimator (\ref{eq::improve-neyman-CRD}) for $\CRD$, and bias-corrected estimators (\ref{eq::logCRR-bias}) and (\ref{eq::logCOR-bias}) and improved variance estimators (\ref{eq::var-logCRR-neyman}) and (\ref{eq::var-logCOR-neyman}) for nonlinear causal measures $\log(\CRR)$ and $\log(\COR)$.
Comparison of these methods are summarized in Figure \ref{fg::simulation}, with average biases, average lengths of the $95\%$ confidence/credible intervals, and the coverage probabilities.

First, the bias-corrected estimators for nonlinear causal measure $\log(\COR)$ do have smaller biases than the original Neymanian estimators and Bayes estimators in most cases.
Second, the confidence intervals from the modified Neyman's method are narrower than Neyman's original method, while still maintaining correct coverage properties. They indeed improve Neyman's original method.
Third,
the average widths of the Bayesian credible intervals are much narrower than the original and modified Neyman's method.
Moreover, when the potential outcomes are independent, the Bayesian credible intervals have correct frequentists' coverage property.  When the potential outcomes are positively associated and the average causal effect is small, the Bayesian credible intervals slightly under-cover the truth. The results in the Supplementary Materials show that when the potential outcomes are negatively associated, even the narrowest Bayesian credible intervals over-cover the true causal measures, and the Neymanian intervals and their modification are too ``conservative.''

As suggested by a reviewer, it is also interesting to investigate the frequency coverage property of the improved variance estimator (\ref{eq::improve-neyman-CRD}) for $\CRD$ under the sharp null. Table \ref{tb::sharp-null} compares the frequency properties of Neymanian variance estimator (\ref{eq::var-neyman}) and its improved version (\ref{eq::improve-neyman-CRD}), with moderate sample size $N=30$ and different choices of $N_{11}$ and $N_{00}$ such that $N_1 = N_0 = 15$. Except for the case with $N_{11}=N_{00}=15$, the improved variance estimators have shorter confidence intervals but preserve the same coverage rates. Under the sharp null, with sample size $N=30$, the sampling variance of $\widehat{\tau}$ is maximized at $N_{11}=N_{00}=15$, and in this case the adjusting term $| \widehat{\tau} | (1- | \widehat{\tau} | )/(N-1)$ has the wildest behavior. Therefore, in practice, if we have small sample sizes and observe that $\widehat{p}_1\approx \widehat{p}_0 \approx 0.5$, the improved variance estimator may hurt our inference. In all other situations, we suggest using the improved variance estimator.

\begin{table}[ht]
\center
\caption{Neymanian and its improved variance estimators for $\CRD$, (\ref{eq::var-neyman}) and (\ref{eq::improve-neyman-CRD}), under the sharp null hypothesis with $N=30$ and $N_{10} = N_{01}= 0$}\label{tb::sharp-null}
\begin{tabular}{|cc|cc|cc|}
\hline
$N_{11}$ &$N_{00}$ & Length & Coverage & Length$^c$ & Coverage$^c$  \\
 \multicolumn{1}{|c}{} & \multicolumn{1}{c|}{} & \multicolumn{2}{c|}{(Using Neyman's estimator)} & \multicolumn{2}{c|}{(Using corrected estimator)} \\ \hline
$20$&$10$&$0.686$&$0.951$&$0.644$&$0.951$\\
$25$&$5$&$0.542$&$0.959$&$0.491$&$0.959$\\
$15$&$15$&$0.728$&$0.971$&$0.683$&$0.858$\\
$12$&$18$&$0.713$&$0.942$&$0.672$&$0.942$\\
$8$&$22$&$0.633$&$0.96$&$0.601$&$0.96$\\
\hline
\end{tabular}
\end{table}

\begin{figure}[ht]
\centering
\subfigure[Independent Potential Outcomes: Cases 1 to 5 with the x-axis denoting the case numbers]{
\includegraphics[width=\textwidth]{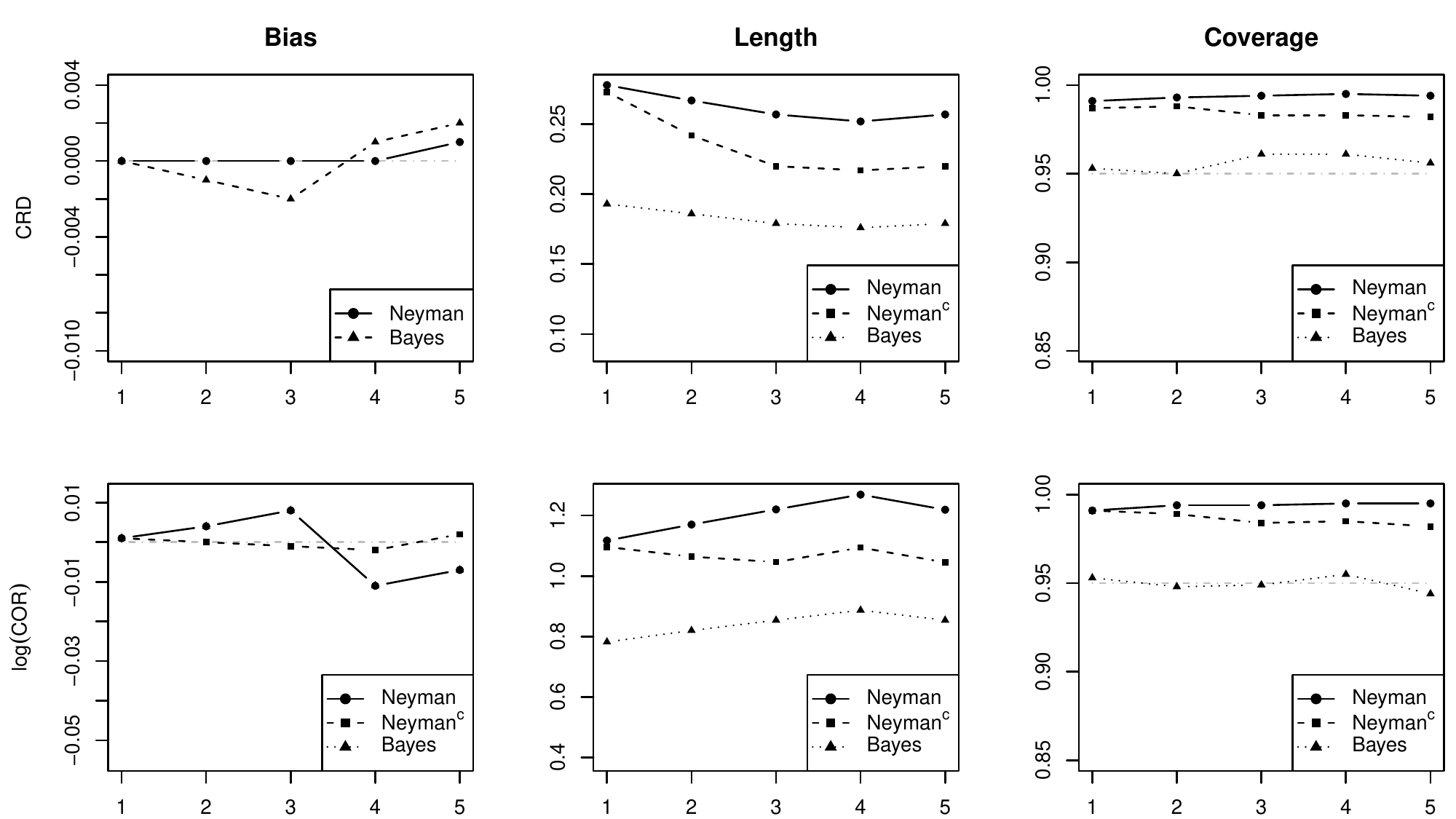}
}
\subfigure[Positively Associated Potential Outcomes: Cases 6 to 13 with the x-axis denoting the case number]{
\includegraphics[width=\textwidth]{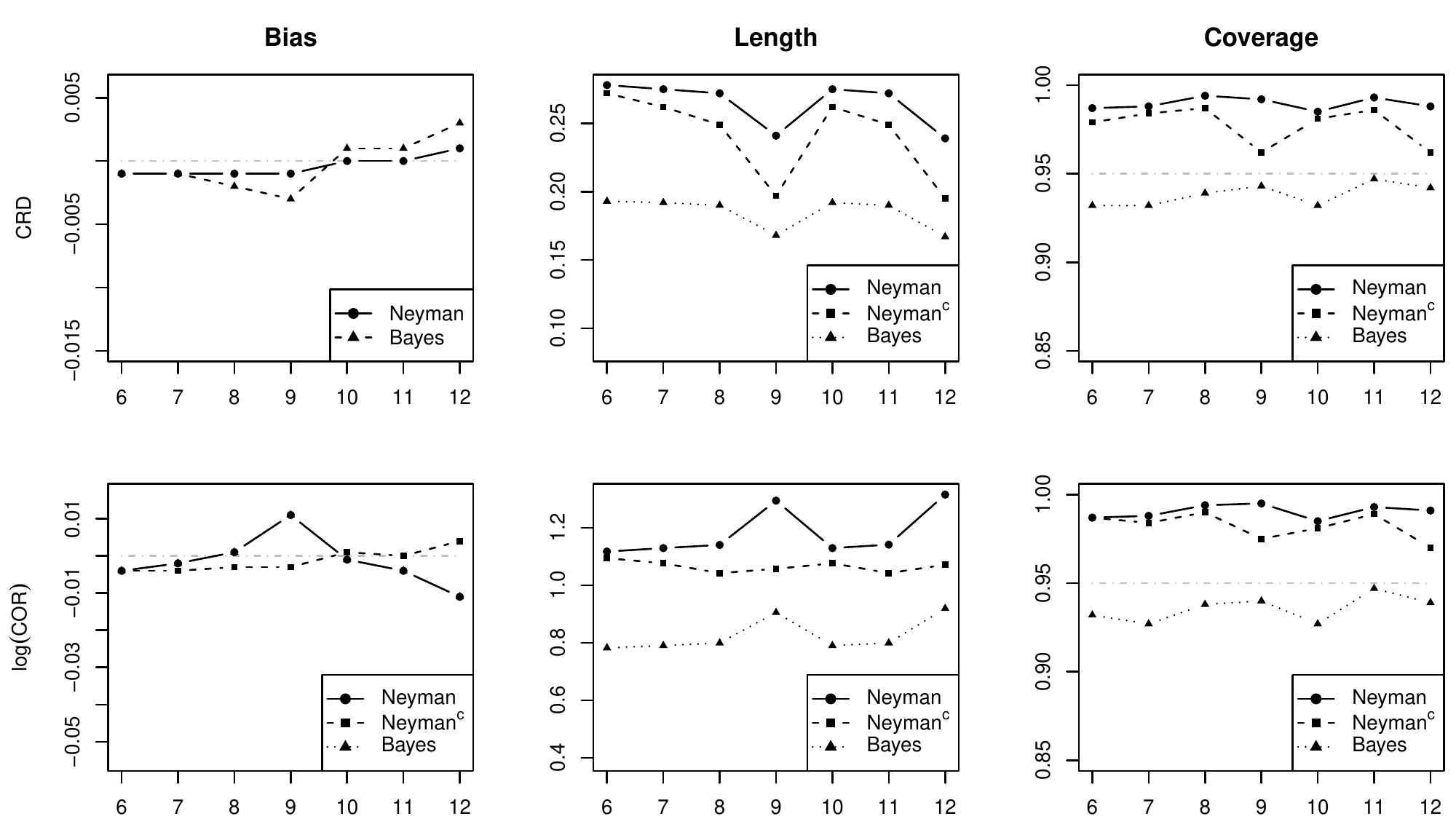}
}
\caption{Simulation Studies. Each subfigure is a $2\times 3$ matrix summarizing results for $2$ parameters and $3$ properties.
Note that ``Neyman'' and ``Bayes'' are indistinguishable for biases of $\log(\COR)$.}\label{fg::simulation}
\end{figure}

\section{\bf Application to a Randomized Controlled Trial}
\label{sec::application}

This example is taken from Bissler et al. (2013), where the authors compare the rate of adverse events in the treatment group versus the control group. The adverse event naspharyngitis occurred in $19$ out of $79$ subjects in the treatment group with everolimus, and it occurred in $12$ out of $39$ subjects in the control group. Therefore, the $2\times 2$ table representing the observed data has cell counts $(n_{11}, n_{10}, n_{01}, n_{00}) = (19,60,12,27).$
In Figure \ref{fg::point-application}, we show the results for three causal measures using Neymanian inference, modified Neymanian inference, and Bayesian posterior inference assuming independent potential outcomes.
The results match with those in our simulation studies in the sense that the bias-corrected estimators are slightly different from the original estimators, and the Bayes posterior credible intervals are much narrower than the confidence intervals from Neymanian inference.
However, in this particular example, all intervals cover zero.

Since the independence assumption between potential outcomes has a strong impact on the Bayesian inference for the finite population causal measures, we conduct a sensitivity analysis as proposed in Section \ref{sec::BSA} by varying $\log(\gamma)$ within $[-2, 4]$, and obtain Bayesian credible intervals for the causal measures at each $\log(\gamma)$.
Figure \ref{fg::sensitivity-application} shows the sensitivity analysis for $\CRD$ and $\log(\COR)$, with similar patterns $\log(\CRR)$ as shown in the Supplementary Materials.
Finally, the widths of the credible interval depend on $\log(\gamma)$; however, in the example, even the widest credible intervals are narrower than the ``conservative'' Neymanian confidence intervals.

\begin{figure}[ht]
\centering
\subfigure[
Inference for the Causal Measures.
We apply Neymanian, improved Neymanian (Neyman$^c$ above) and Bayesian approaches with
the segments representing the $95\%$ confidence/credible intervals and centers illustrating the point estimators.]{
\label{fg::point-application}
\begin{tabular}{ccc}
$\CRD$ & $\log(\CRR)$ & $\log(\COR)$\\
\includegraphics[width = 0.3\textwidth]{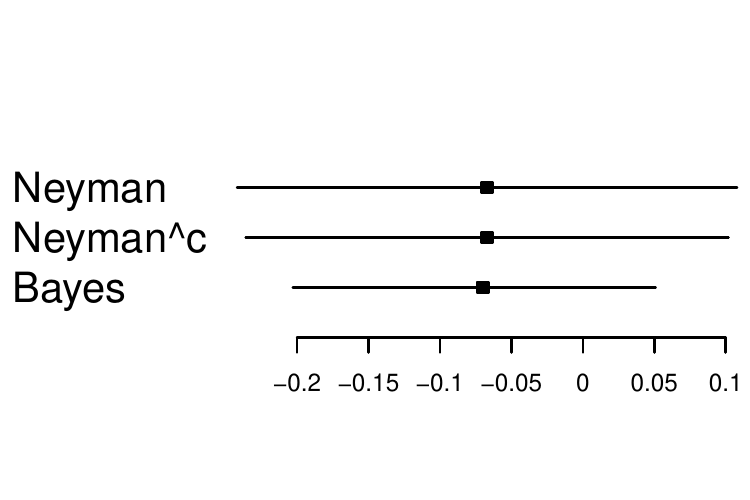} &
\includegraphics[width = 0.3\textwidth]{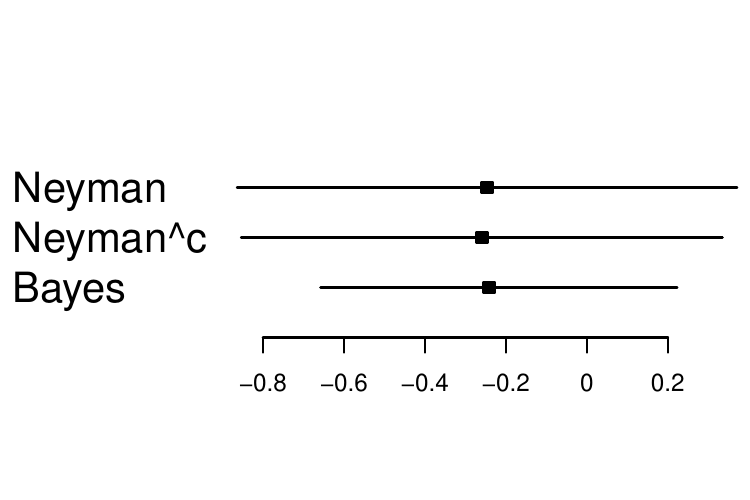}&
\includegraphics[width = 0.3\textwidth]{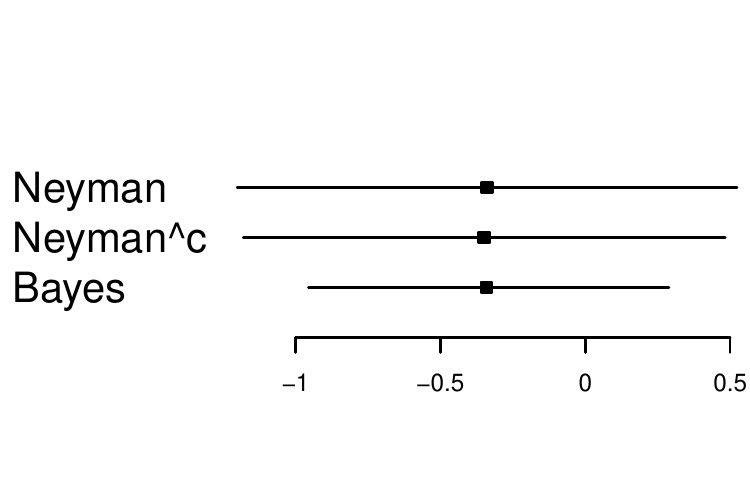}\\
\end{tabular}
}
\subfigure[Bayesian Sensitivity Analysis for $\CRD$ and $\log(\COR)$.
The intervals named ``independence'' are the $95\%$ posterior credible intervals under independence of the potential outcomes,  and the intervals named ``widest'' are the widest $95\%$ credible intervals over the ranges of the sensitivity parameters.]{\label{fg::sensitivity-application}
\includegraphics[width = \textwidth]{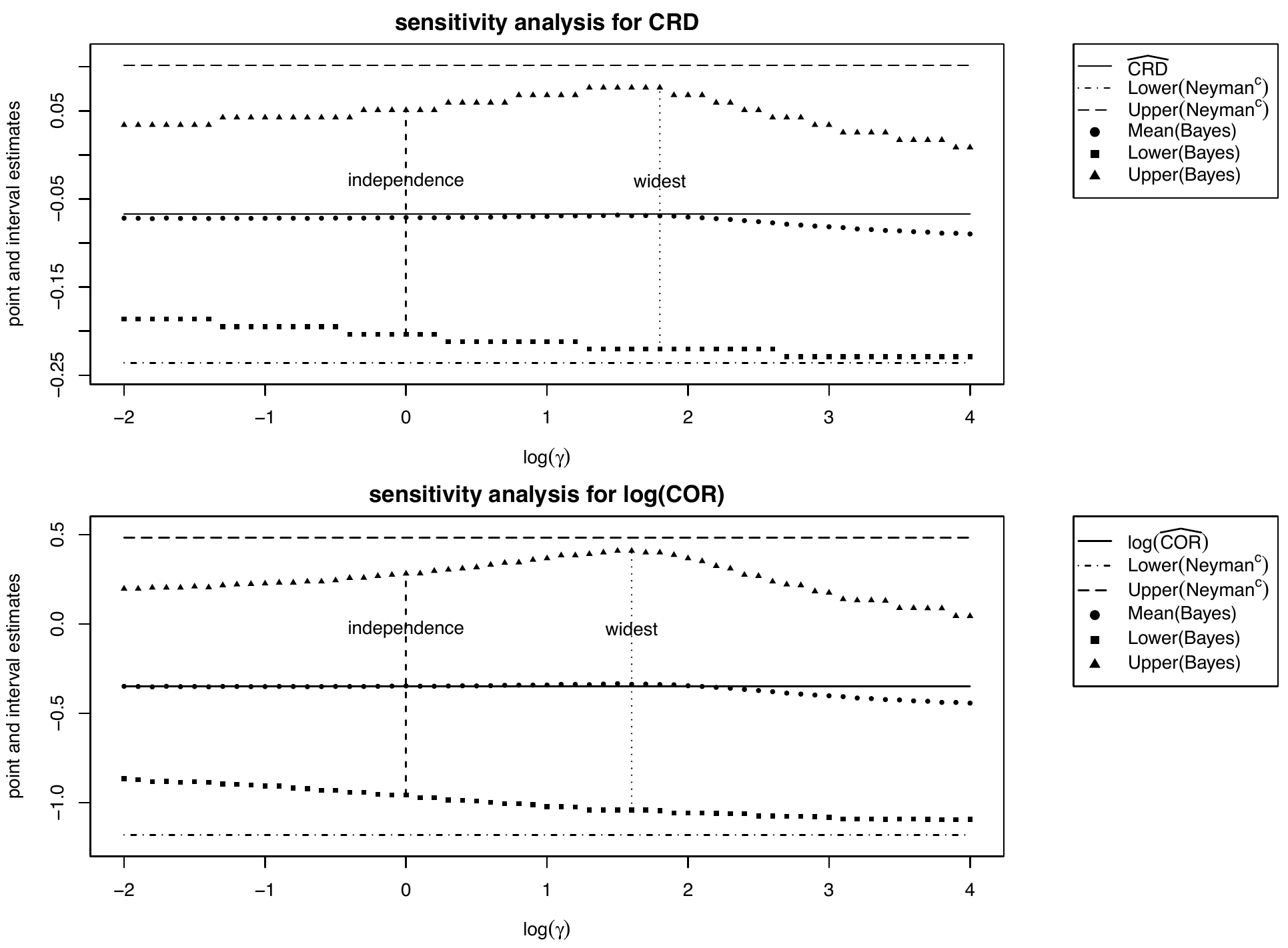}
}
\caption{A Randomized Experiments with Observed Data $(n_{11}, n_{10}, n_{01}, n_{00}) = (19,60,12,27)$.}
\end{figure}

\section{\bf Discussion}

In this paper, we have discussed causal inference of completely randomized treatment-control studies with binary outcomes under the potential outcomes model. We first made a connection between the Fisher randomization test (Fisher 1935a) and Fisher's exact test (Fisher 1935b) for binary outcomes, and proposed a procedure which uniformly dominates Neyman (1923)'s method. Although widely used in clinical practice, statistical inference for general nonlinear causal measures are based on the assumption of independent Binomial models, which is not justified by randomization. Based on randomization, our asymptotic analysis shows that the widely used variance estimators are either incorrect or inefficient, unless the null hypothesis of zero average causal effect is true.

Ding (2014) shows that the Neyman's test for zero average causal effect tends to be more powerful than Fisher's test for zero individual causal effect for many realistic cases including balanced designs. Our variance estimator (\ref{eq::improve-neyman-CRD}) further improves Neyman's test. Our new result is not contradictory to the classical result that Fisher's exact test is the uniformly most powerful unbiased test for equal probability of two independent Binomials (Lehmann and Romano, 2006). Both the Fisherian and Neymanian approaches are derived under the potential outcomes model, but the classical result for Fisher's exact test is derived under the independent Binomial models.

Traditionally, the variance formulae in (\ref{eq::var-logCOR-est}) and (\ref{eq::var-logCOR-binomial}) for $\log(\COR)$ have been used in both experimental and observational studies (including both prospective and retrospective observational studies).
Due to the symmetry of the variance formulae in (\ref{eq::var-logCOR-est}) and (\ref{eq::var-logCOR-binomial}) with respect to the treatment and outcome, researchers found that statistical inference of the log odds ratio measure is invariant to the sampling scheme (experimental study, prospective or retrospective observational studies), which was regarded as a celebrated and also mysterious feature of the log of the odds ratio.
As a pioneer in epidemiology and biostatistics, Cornfield (1959) said that
``there is a distinction seems undeniable, but its exact nature is elusive,'' when he was discussing experimental and observational studies.
However, randomized experiments are fundamentally different from observational studies, and especially different from retrospective observational studies.
Under the potential outcomes model with the potential outcomes treated as fixed quantities, the randomness of the observed outcomes comes only from the physical randomization in experiments.
Therefore, the treatment and the observed outcome are asymmetric unless the sharp null is true, and the variance in (\ref{eq::asymptotic-variance-logCOR}) for $\log(\COR)$ and its estimator in (\ref{eq::var-logCOR-neyman}) reflect the asymmetric nature explicitly.
In a recent comment on Cornfield (1959), Rubin (2012) suggested revealing the hidden nature of different studies using potential outcomes. Indeed, our results verify Rubin (2012)'s conjecture.

In order to reveal the importance of the correlation between potential outcomes and the intrinsic lack of additivity for binary outcomes, we focus our discussion on two by two tables from completely randomized experiments. The same idea can be also applied to observational studies as long as the ignorability assumption holds. We can either stratify on the observed covariates or propensity scores (Rosenbaum and Rubin 1983), and then within each strata the data can be approximately viewed as generated from randomized experiments (Rosenbaum 2002).
The findings of this paper can be generalized in many ways. For instance, we can discuss Neymanian randomization inference for full factorial or fractional factorial designs with binary outcomes, since the current discussion (Dasgupta et al. 2015) is restricted to continuous outcomes. It will also be interesting to discuss causal inference under the potential outcomes model for general outcomes (categorical data, counts, survival times, etc.). These topics are our ongoing or future research projects.

\appendix

\makeatletter   
 \renewcommand{\@seccntformat}[1]{APPENDIX~{\csname the#1\endcsname}.\hspace*{1em}}
 \makeatother

\section{\bf Bias and Variance Reduction for Nonlinear Causal Measures}

\noindent {\bfseries Result for $\log(\CRR)$.}
A bias-corrected estimator for $\log (\CRR)$ is
\begin{eqnarray}
\label{eq::logCRR-bias}
\log (\widehat{\CRR})^c =
\log (\widehat{\CRR}) +  \frac{  N_0}{ 2 \widehat{p}_1^2 N_1 N}   s_1^2 - \frac{N_1}{ 2 \widehat{p}_0^2 N_0 N }s_0^2 ,
\end{eqnarray}
with improved variance estimator
\begin{eqnarray}
\label{eq::var-logCRR-neyman}
\widehat{V}_{\CRR}^c  =
\widehat{V}_{\CRR}
- \frac{|\widehat{\tau}| (1 - |\widehat{\tau}|)  }{ \widehat{p}_1 \widehat{p}_0 (N-1) },
\end{eqnarray}
where $\widehat{V}_{\CRR}$ is defined in (\ref{eq::var-logCRR-est}).

\noindent {\bfseries Result for $\log(\COR)$.}
\label{thm::logCOR-bias-var}
A bias-corrected estimator for $\log (\COR)$ is
\begin{eqnarray}
\label{eq::logCOR-bias}
\log (\widehat{\COR})^c &=&
\log( \widehat{\COR} )+  \frac{1 - 2 \widehat{p}_1}{2 \widehat{p}_1^2 (1 - \widehat{p}_1)^2 }  \frac{N_0}{  N_1 N} s_1^2
- \frac{1 - 2 \widehat{p}_0}{2 \widehat{p}_0^2(1 - \widehat{p}_0)^2} \frac{N_1}{N_0 N} s_0^2 ,
\end{eqnarray}
with improved variance estimator
\begin{eqnarray}\label{eq::var-logCOR-neyman}
\widehat{V}_{\COR}^c=
\widehat{V}_{\COR}
- \frac{|\widehat{\tau}| (1 - |\widehat{\tau}|) }{  \widehat{p}_1(1 - \widehat{p}_1)\widehat{p}_0 (1 - \widehat{p}_0) (N-1)      },
\end{eqnarray}
where $\widehat{V}_{\COR}$ is defined in (\ref{eq::var-logCOR-est}).

\section*{\bf Supplementary Materials}
In the Supplementary Materials, Appendix B contains two useful lemmas and their proofs, and Appendices C and D provide proofs of all the theorems and results in Appendix A above.
Appendix E presents details about the simulation studies, especially the case with negatively associated potential outcomes and relatively small sample sizes.
Appendix F gives more details about the example in Section \ref{sec::application}.

\mbox{}\vspace*{1ex}
\mbox{}

%

\section*{\bf References}
\begin{description} \itemsep=-\parsep \itemindent=-1.2 cm

\item
Angrist, J. D., Imbens, G. W., and Rubin, D. B. (1996). Identification of causal effects using instrumental variables (with discussion). {\it Journal of the American Statistical Association}, {\bfseries 91}, 444--455.


\item
Berkson, J. (1978). Do the marginal totals of the $2\times 2$ table contain relevant information respecting the table proportions? (with discussion) {\it Journal of Statistical Planning and Inference}, {\bfseries 2}, 43--44.

\item
Bissler, J. J. et al. (2013). Everolimus for angiomyolipoma associated with tuberous sclerosis complex or sporadic lymphangioleiomyomatosis (EXIST-2): a multicentre, randomised, double-blind, placebo-controlled trial. {\it The Lancet}, {\bfseries 381}, 817--824.


\item
Chernoff, H. (2004). Information, for testing the equality of two probabilities, from the margins of the $2\times 2$ table. {\it Journal of Statistical Planning and Inference}, {\bfseries 121}, 209--214.


\item
Cochran, W. G. (1977). {\it Sampling Techniques}. John Wiley \& Sons: New York.

\item
Copas, J. B. (1973). Randomization models for the matched and unmatched $2\times 2$ tables. {\it Biometrika}, {\bfseries 60}, 467--476.


\item
Cornfield J. (1959). Principles of research. {\it American Journal of Mental Deficiency}, {\bfseries 64}, 240--252.


\item
Cox, D. R. (1958). {\it Planning of Experiments}. Wiley: New York.

%

\item Dasgupta, T., Pillai, N., and Rubin, D. B. (2015).
Causal inference from $2^k$
factorial designs using the potential
outcomes model. {\it Journal of the Royal Statistical Society, Series B (Statistical Methodology)}, in press, available at
\url{http://arxiv.org/abs/1211.2481}.


\item Ding, P. (2014). A paradox from randomization-based causal inference. Available at \url{http://arxiv.org/abs/1402.0142}.

\item Fisher, R. A. (1935a). {\it The Design of Experiments, 1st edn}. Oliver \& Boyd: Oxford, England.

\item
Fisher, R. A. (1935b). The logic of inductive inference. {\it Journal of the Royal Statistical Society}, {\bfseries 98}, 39--82.

\item
Gadbury, G. L. (2001). Randomization inference and bias of standard errors. {\it The American Statistician}, {\bfseries 55}, 310--313.

%
%
%
%

%
%



\item Kempthorne, O. (1952) {\it The Design and Analysis of Experiments}. Wiley: New York.

\item Lehmann, E. L. and Romano, J. P. (2006). {\it Testing Statistical Hypotheses}. Springer: New York.

\item Neyman, J. (1923). On the application of probability theory to agricultural experiments. Essay on principles (with discussion). Section 9 (translated). {\it Statistical Science}, {\bfseries  5}, 465--480.

%
%
%

%
%


\item
Robins, J. M. (1988).
Confidence intervals for causal parameters.
{\it Statistics in Medicine}, {\bfseries 7}, 773--785.

\item
Rosenbaum, P. R. and Rubin, D. B. (1983). The central role of the propensity score in observational studies for causal effects. {\it Biometrika}, {\bfseries 70}, 41--55.

\item
Rosenbaum, P. R. (2002).{\it  Observational Studies}. Springer: New York.

\item
Rothman, K. J., Greenland, S., and Lash, T. L. (2008). {\it Modern Epidemiology}. Lippincott Williams \& Wilkins: Philadelphia, PA.

\item Rubin, D. B. (1974). Estimating causal effects of treatments in randomized and nonrandomized studies. {\it Journal of Educational Psychology}, {\bfseries 66}, 688--701.

%

\item Rubin, D. B. (1977). Assignment to treatment group on the basis of a covariate. {\it Journal of Educational Statistics}, {\bfseries 2}, 1--26.

\item Rubin, D. B. (1978). Bayesian inference for causal effects: The role of randomization. {\it The Annals of Statistics}, {\bfseries 6}, 34--58.

\item Rubin, D. B. (1980). Comment on ``Randomization analysis of experimental data: the Fisher randomization test''
by D. Basu. {\it Journal of the American Statistical Association,} {\bfseries 75,} 591--593.

\item Rubin, D. B. (1984). Bayesianly justifiable and relevant frequency calculations for the applied statistician. {\it The Annals of Statistics}, {\bfseries 12}, 1151--1172.

\item Rubin, D. B. (1990). Neyman (1923) and causal inference in experiments and observational studies. {\it Statistical Science}, {\bfseries 5}, 472--480.

%

\item Rubin, D. B. (2005). Causal inference using potential outcomes: design, modeling, and decisions. {\it Journal of the American Statistical Association}, {\bfseries 100}, 322--331.

%

\item Rubin, D. B. (2010). Reflections stimulated by the comments of Shadish (2010) and West and Thoemmes (2010). {\it Psychological Methods}, {\bfseries 15}, 38--40.

\item
Rubin, D. B. (2012). Potential updates to Cornfield's 1959 ``Principles of Research''. {\it Statistics in Medicine}, {\bfseries 31}, 2778--2779.

%
%
%
%


\item
Woolf, B. (1955). On estimating the relation between blood group and disease. {\it Annals of Human Genetics}, {\bfseries 19}, 251--253.

\item Yates, F. (1984). Tests of significance for $2\times 2$ contingency tables (with discussion). {\it Journal of the Royal Statistical Society, Series A}, {\bfseries 147}, 426--463.

\end{description}

\begin{center}
\bf \Large  Supplementary Materials for \\
``A Potential Tale of Two by Two Tables from Completely Randomized Experiments''
\end{center}

 \appendix

\setcounter{equation}{0}
\setcounter{section}{1}

\renewcommand {\thefigure} {A.\arabic{figure}}
\renewcommand {\thetable} {A.\arabic{table}}
\renewcommand {\theequation} {A.\arabic{equation}}
\renewcommand {\thelemma} {A.\arabic{lemma}}
\makeatletter   
 \renewcommand{\@seccntformat}[1]{APPENDIX~{\csname the#1\endcsname}.\hspace*{1em}}
 \makeatother

In the Supplementary Materials, Appendix B contains two useful lemmas and their proofs, and Appendix C and D provides proofs for all the theorems and the results in Appendix A above.
Appendix E presents details about the simulation studies, especially the case with negatively associated potential outcomes.
Appendix F gives more details about the example in Section 7.

\section{\bf Lemmas and Their Proofs}

\begin{lemma}
\label{lemma::randomization}
The completely randomized treatment assignment $\bm{W}$ satisfies $E(W_i) =N_1/N$, $\var(W_i) = N_1 N_0 /N^2$, and $\cov(W_i, W_j) = - N_1 N_0/ \{N^2 (N-1)\}$. If $(c_1, \cdots, c_N)$ and $(d_1, \cdots, d_N)$ are constants with $\overline{c} = \sumN c_i/N$ and $\overline{d} = \sumN d_i/N$, we have
\begin{eqnarray*}
E\left(  \sumN W_i c_i  \right) = N_1 \overline{c}, \quad 
\cov\left(  \sumN W_i c_i, \sumN W_i d_i \right) = \frac{N_1 N_0}{N (N-1)} \sumN (c_i - \overline{c}) (d_i - \overline{d}).
\end{eqnarray*}
\end{lemma}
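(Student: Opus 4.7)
The plan is to derive the marginal moments of a single $W_i$ and the pairwise joint moments from the symmetry built into the completely randomized design, and then obtain the linear and quadratic statistics as straightforward consequences.

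First, I would exploit exchangeability. Under complete randomization, each unit is equally likely to end up in treatment, so $W_i \sim \text{Bern}(N_1/N)$, which immediately yields $E(W_i) = N_1/N$ and $\var(W_i) = (N_1/N)(N_0/N) = N_1 N_0/N^2$. For the pairwise piece, symmetry gives $\Pr(W_i = 1, W_j = 1) = N_1(N_1 - 1)/\{N(N-1)\}$ for $i \ne j$, since this is the probability that two specified units are both assigned to treatment when $N_1$ of the $N$ units are chosen. Then
\[
\cov(W_i, W_j) = \frac{N_1(N_1-1)}{N(N-1)} - \frac{N_1^2}{N^2} = \frac{N_1\{N(N_1-1) - N_1(N-1)\}}{N^2(N-1)} = -\frac{N_1 N_0}{N^2(N-1)},
\]
after simplifying the numerator $N(N_1-1) - N_1(N-1) = N_1 - N = -N_0$.

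Next, the expectation of the linear statistic follows by linearity: $E(\sum_i W_i c_i) = \sum_i c_i E(W_i) = (N_1/N)\sum_i c_i = N_1 \bar c$.

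The only place any work is needed is the covariance formula, and the main trick is the standard identity $\sum_{i \ne j} c_i d_j = (\sum_i c_i)(\sum_j d_j) - \sum_i c_i d_i = N^2 \bar c \bar d - \sum_i c_i d_i$. Splitting the double sum into diagonal and off-diagonal pieces,
\[
\cov\!\Bigl(\sum_i W_i c_i, \sum_j W_j d_j\Bigr)
= \frac{N_1 N_0}{N^2}\sum_i c_i d_i - \frac{N_1 N_0}{N^2(N-1)}\sum_{i \ne j} c_i d_j.
\]
Substituting the identity above and collecting terms over the common denominator $N^2(N-1)$ produces $N\sum_i c_i d_i - N^2 \bar c \bar d$ inside the braces, which equals $N \sum_i (c_i - \bar c)(d_i - \bar d)$. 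Dividing by $N$ then yields the claimed coefficient $N_1 N_0/\{N(N-1)\}$.

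There is no real obstacle here; the whole lemma is bookkeeping built on the two inputs $P(W_i = 1) = N_1/N$ and $P(W_i = W_j = 1) = N_1(N_1-1)/\{N(N-1)\}$. The only thing worth flagging is to check the sign on $\cov(W_i, W_j)$ when simplifying, and to remember to replace the off-diagonal sum via the ``add and subtract the diagonal'' identity rather than trying to manipulate it in place.
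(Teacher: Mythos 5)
Your proof is correct in every step: the marginal and pairwise moments of the $W_i$'s follow from exchangeability as you state, the sign of $\cov(W_i,W_j)$ works out to $-N_1N_0/\{N^2(N-1)\}$, and the diagonal/off-diagonal split combined with the identity $\sum_{i\ne j}c_id_j = N^2\overline{c}\,\overline{d}-\sum_i c_id_i$ yields the claimed covariance of the linear statistics. The paper does not actually carry out this computation: its proof consists of the observation that the treated and control groups are simple random samples from the finite population, followed by a citation to Cochran (1977). Your argument is precisely the standard derivation that underlies that citation, so the two routes are mathematically the same; yours simply makes explicit the indicator-variable bookkeeping that the paper delegates to the survey-sampling literature, which is arguably preferable in a self-contained appendix.
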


\begin{proof}
[Proof of Lemma \ref{lemma::randomization}]
The observed outcomes in the treatment and control can be viewed as two sets of simple random samples from the finite population of $\{ Y_i(1):i=1,\cdots, N\} $ and $\{ Y_i(0): i=1,\cdots, N\}$, respectively. Therefore, the conclusion follows from classic survey sampling textbooks such as Cochran (1977).
\end{proof}

\begin{lemma}
\label{lemma:vcov}
The estimators, $\widehat{p}_1$ and $\widehat{p}_0$, are unbiased for $p_1$ and $p_0$, with variances and covariance:
\begin{eqnarray*}
\var(\widehat{p}_1)  = \frac{N_0}{  N_1 N} S_1^2, \quad
\var(\widehat{p}_0) =  \frac{N_1}{  N_0 N} S_0^2 ,\quad 
\cov(\widehat{p}_1 , \widehat{p}_0)   =   -\frac{1}{N} S_{10} =      -  \frac{1}{2N}  (S_1^2 + S_0^2 - S_\tau^2).
\end{eqnarray*}
\end{lemma}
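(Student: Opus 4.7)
The plan is to derive all four identities as direct consequences of Lemma \ref{lemma::randomization}, by writing $\widehat{p}_1$ and $\widehat{p}_0$ as linear combinations of the $W_i$'s applied to the (nonrandom) potential outcomes $Y_i(1)$ and $Y_i(0)$.

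First, I would rewrite the estimators in terms of the assignment indicators. Since $Y_i^{\text{obs}} = W_i Y_i(1) + (1-W_i) Y_i(0)$, we have $\widehat{p}_1 = N_1^{-1} \sumN W_i Y_i(1)$ and $\widehat{p}_0 = N_0^{-1} \sumN (1-W_i) Y_i(0)$. Unbiasedness then follows immediately from the first identity in Lemma \ref{lemma::randomization} with $c_i = Y_i(1)$ (respectively $Y_i(0)$): $E(\widehat{p}_1) = N_1^{-1} \cdot N_1 p_1 = p_1$, and similarly $E(\widehat{p}_0) = N_0^{-1}(N p_0 - N_1 p_0) = p_0$.

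Next, for $\var(\widehat{p}_1)$, I would apply the second identity in Lemma \ref{lemma::randomization} with $c_i = d_i = Y_i(1)$, yielding $\var(\widehat{p}_1) = N_1^{-2} \cdot \frac{N_1 N_0}{N(N-1)} \sumN (Y_i(1) - p_1)^2 = \frac{N_0}{N_1 N} S_1^2$ after using the definition of $S_1^2$. For $\var(\widehat{p}_0)$, the quickest route is to observe that $\widehat{p}_0 = N_0^{-1} \sumN Y_i(0) - N_0^{-1} \sumN W_i Y_i(0)$, so only the $\sumN W_i Y_i(0)$ piece contributes to the variance; applying Lemma \ref{lemma::randomization} with $c_i = d_i = Y_i(0)$ and dividing by $N_0^2$ gives $\frac{N_1}{N_0 N} S_0^2$. (Alternatively, one can invoke the symmetry of swapping treatment and control to avoid rewriting.)

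For the covariance, I would again use $\widehat{p}_0 = N_0^{-1} \sumN Y_i(0) - N_0^{-1} \sumN W_i Y_i(0)$ so that the deterministic piece drops out:
\begin{equation*}
\cov(\widehat{p}_1, \widehat{p}_0) = -\frac{1}{N_1 N_0} \cov\!\left(\sumN W_i Y_i(1),\, \sumN W_i Y_i(0)\right).
\end{equation*}
Applying Lemma \ref{lemma::randomization} with $c_i = Y_i(1)$ and $d_i = Y_i(0)$ gives $-\frac{1}{N(N-1)} \sumN (Y_i(1) - p_1)(Y_i(0) - p_0) = -S_{10}/N$. The second form of the covariance is then obtained by substituting the identity $S_{10} = (S_1^2 + S_0^2 - S_\tau^2)/2$ from equation (\ref{eq:var_tau2}).

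There is no real obstacle here: the only mild subtlety is remembering to rewrite $(1-W_i)$ as $1 - W_i$ before applying Lemma \ref{lemma::randomization}, so that the deterministic part contributes neither to the variance of $\widehat{p}_0$ nor to the covariance.
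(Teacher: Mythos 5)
Your proposal is correct and follows essentially the same route as the paper: both derive unbiasedness, the variances, and $\cov(\widehat{p}_1,\widehat{p}_0)=-S_{10}/N$ directly from Lemma \ref{lemma::randomization} (viewing the assignment as simple random sampling), and both obtain the second form of the covariance from the identity $2S_{10}=S_1^2+S_0^2-S_\tau^2$, which you cite from (\ref{eq:var_tau2}) while the paper re-derives it by summing the pointwise decomposition $2\{Y_i(1)-p_1\}\{Y_i(0)-p_0\}=\{Y_i(1)-p_1\}^2+\{Y_i(0)-p_0\}^2-(\tau_i-\tau)^2$. Your explicit handling of the $(1-W_i)$ term is a slightly more careful spelling-out of the same calculation.
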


\begin{proof}
[Proof of Lemma \ref{lemma:vcov}] 
The unbiasedness and variances of $\widehat{p}_1$ and $\widehat{p}_0$ follow directly from Lemma \ref{lemma::randomization}.
The covariance between $\widehat{p}_1$ and $\widehat{p}_0$ is
\begin{eqnarray*}
\cov(\widehat{p}_1 , \widehat{p}_0) =   - \frac{1}{N_1 N_0} \frac{N_1 N_0 }{ N } S_{10} = -\frac{1}{N} S_{10}.
\end{eqnarray*}
Summing from $i=1$ to $N$ over the following decomposition
\begin{eqnarray*}
 2\{ Y_i(1) -  p_1  \} \{  Y_i(0) - p_0 \} = \{   Y_i(1) - p_1 \}^2 +  \{ Y_i(0) - p_0 \} ^2 - (\tau_i - \tau)^2,
\end{eqnarray*}
we have
$
2S_{10} = S_1^2  + S_0^2 - S_\tau^2,
$
and therefore the covariance can also be expressed as
\begin{eqnarray*}
\cov(\widehat{p}_1 , \widehat{p}_0) = -  \frac{1}{2N}  (S_1^2 + S_0^2 - S_\tau^2).
\end{eqnarray*}
\end{proof}

\section{\bf Proofs of the Theorems}

\begin{proof}
[Proof of Theorem 1]
Define the
proportions $p_{jk} = N_{jk}/N$.
We first rewrite $S_\tau^2/N$ as
\begin{eqnarray*}
 \frac{S_{\tau}^2}{N} &=& \frac{1}{N(N-1)} \sumN (\tau_i - \tau)^2\\
& =&  \frac{1}{N(N-1)} \left\{ (N_{10}+N_{01}) - \frac{(N_{10} - N_{01})^2}{N}   \right\}\\
& =&   \frac{1}{N-1}(    \tau + 2p_{01}  - \tau^2  ).
\end{eqnarray*}
In order to find the lower bound of $S_\tau^2/N$, we only need to find the lower bound for $p_{01}$.
This reduces to the following linear programming problem:
\begin{eqnarray*}
\left\{
\begin{array}{ccl}
&\min\limits_{p_{11}, p_{10}, p_{01}, p_{00}} & p_{01} \\
&s.t.& p_{11} + p_{10} = p_1, \\
&&p_{11} + p_{01} = p_0, \\
&&p_{11}+p_{10}+p_{01}+p_{00} = 1,\\
&&   p_{ij} \geq 0, i, j = 0,1.
\end{array}\right.
\end{eqnarray*}
Since $p_{01} = p_0 - p_1 + p_{10} \geq -\tau$ and $p_{01} \geq 0$, the lower bound of $p_{01}$ is
$$
p_{01} \geq \max(-\tau, 0),
$$
and therefore, the lower bound of $S_\tau^2/N$ is
$$
{ S_\tau^2 \over N} \geq \frac{1}{N-1}\{     \tau + 2\max(-\tau, 0)  - \tau^2  \} = \frac{1}{N-1}\{     \max(-\tau, \tau)  - \tau^2  \}  = \frac{|\tau|(1 - |\tau|)}{N-1}.
$$
From the derivation above, the bound is sharp, and is attained if and only if $p_{10} = 0$ or $p_{01} = 0$. Or, equivalently, $S_\tau^2$ attains its minimum at either of the two vertices within the feasible region of the linear programming problem above: $(p_{11}, p_{10}, p_{01}, p_{00}) = (p_1, 0,  - \tau, 1-p_0)$ if $\tau\leq 0$, and $(p_{11}, p_{10}, p_{01}, p_{00}) = (p_0, \tau,0,1-p_1)$ if $\tau\geq 0$.
\end{proof}

\begin{proof}
[Proof of Theorem 2]
Define $N_w' = N_w + \alpha_w + \beta_w$ and $\widehat{p}_w' = (n_{w1} + \alpha_w) / N_w'$ as the sample sizes and proportions adjusted by the pseudo counts of the prior distributions.
The posterior means of $\pi_{1+}$ and $\pi_{+1}$ are
$$
E( \pi_{1+}\mid \bm{W}, \bm{Y}^{\text{obs}})  =  \widehat{p}_1',
\quad
\text{and}
\quad
E( \pi_{+1}\mid \bm{W}, \bm{Y}^{\text{obs}})  =  \widehat{p}_0'.
$$
The posterior variances of $\pi_{1+}$ and $\pi_{+1}$ are
$$
\var( \pi_{1+} \mid \bm{W}, \bm{Y}^{\text{obs}})  =  \frac{\widehat{p}_1'  (1 - \widehat{p}_1')}{N_1' + 1},
\quad
\text{and}
\quad
\var( \pi_{+1} \mid \bm{W}, \bm{Y}^{\text{obs}})  =  \frac{\widehat{p}_0'  (1 - \widehat{p}_0')}{N_0' + 1}.
$$
Immediately, we have
\begin{eqnarray*}
E\{  \pi_{1+} (1 -  \pi_{1+})\mid \bm{W}, \bm{Y}^{\text{obs}} \}   &=& \widehat{p}_1' (1 - \widehat{p}_1') - \frac{\widehat{p}_1' (1 - \widehat{p}_1')}{N_1' + 1}
=  \frac{N_1'}{N_1' + 1} \widehat{p}_1'(1 - \widehat{p}_1'),\\
E\{  \pi_{+1} (1 -  \pi_{+1})\mid \bm{W}, \bm{Y}^{\text{obs}} \}   &=& \widehat{p}_0' (1 - \widehat{p}_0') - \frac{\widehat{p}_0' (1 - \widehat{p}_0')}{N_0' + 1}
=  \frac{N_0'}{N_0' + 1} \widehat{p}_0'(1 - \widehat{p}_0') .
\end{eqnarray*}
Applying the laws of conditional expectation and variance to (11) in the main text, we obtain the posterior mean
\begin{eqnarray*}
&&E(\tau \mid \bm{W}, \bm{Y}^{\text{obs}}) \\
&=& E\left\{  E(\tau\mid \bm{W}, \bm{Y}^{\text{obs}}, \pi_{1+}, \pi_{+1})   \right\}  \\
&=& E\left(      \frac{n_{11} + N_0 \pi_{1+} - n_{01} - N_1 \pi_{+1} }{N}   \mid   \bm{W}, \bm{Y}^{\text{obs}} \right)\\
&=& \frac{  n_{11} + N_0 \widehat{p}_1' - n_{01} - N_1 \widehat{p}_0'   }{  N  }\\
&=& \frac{N_1' + N_0}{  N} \widehat{p}_1' - \frac{N_0' + N_1}{N} \widehat{p}_0' - \frac{\alpha_1 - \alpha_0}{N},
\end{eqnarray*}
and posterior variance
\begin{eqnarray*}
&&\var(\tau \mid \bm{W}, \bm{Y}^{\text{obs}}) \\
&=&   E\left\{  \var(\tau\mid \bm{W}, \bm{Y}^{\text{obs}}, \pi_{1+}, \pi_{+1})  \right\}  
+ \var\left\{ E(\tau\mid \bm{W}, \bm{Y}^{\text{obs}}, \pi_{1+}, \pi_{+1})   \right\} \\
&=&  E\left\{   \frac{N_0}{N^2} \pi_{1+} (1 - \pi_{1+})  + \frac{N_1}{N^2} \pi_{+1} (1 - \pi_{+1})    \mid \bm{W}, \bm{Y}^{\text{obs}}   \right\}   
+ \var\left\{     \frac{N_0}{N} \pi_{1+} - \frac{N_1}{N} \pi_{+1}      \mid \bm{W}, \bm{Y}^{\text{obs}}   \right\} \\
& = & \frac{N_0 N_1'}{ N^2 (N_1' + 1) } \widehat{p}_1'(1 - \widehat{p}_1')
+ \frac{N_1 N_0'}{N^2 (N_0' + 1)} \widehat{p}_0' (1 - \widehat{p}_0')
+ \frac{N_0^2}{N^2 (N_1' + 1)}  \widehat{p}_1' (1 - \widehat{p}_1')
+ \frac{N_1^2}{N^2 (N_0' + 1)} \widehat{p}_0' (1 - \widehat{p}_0')  \\
&=& \frac{N_0(N_1'+N_0)}{N^2}  \frac{\widehat{p}_1' (1 - \widehat{p}_1')}{N_1' + 1} 
+ \frac{N_1(N_1 + N_0') }{N^2} \frac{\widehat{p}_0' (1 - \widehat{p}_0')}{N_ 0' +1 } . 
\end{eqnarray*}
When we have large sample size, the prior pseudo counts are overwhelmed by the observed counts $n_{jk}$'s, and the posterior mean and variance of $\tau$ can be approximately by
\begin{eqnarray*}
E(\tau \mid \bm{W}, \bm{Y}^{\text{obs}})&\approx&\widehat{\tau},\\
\var(\tau \mid \bm{W}, \bm{Y}^{\text{obs}})&\approx&  \frac{N_0}{N}  \frac{\widehat{p}_1 (1 - \widehat{p}_1)}{N_1 - 1} + \frac{N_1 }{N} \frac{\widehat{p}_0 (1 - \widehat{p}_0)}{N_ 0 -1 } .  
\end{eqnarray*}
\end{proof}

\begin{proof}
[Proof of Theorem 3]
Applying Taylor expansion, we have
\begin{eqnarray*}
 \log (\widehat{\CRR} )- \log (\CRR)  = \frac{1}{p_1} (\widehat{p}_1 - p_1) 
 - \frac{1}{p_0} (\widehat{p}_0 - p_0) 
 +  o_p\left(  \frac{1}{N^{1/2}} \right).
\end{eqnarray*}
According to Lemma \ref{lemma:vcov}, the asymptotic variance of $  \log( \widehat{\CRR} )$ is
\begin{eqnarray*}
&&\frac{1}{p_1^2} \var( \widehat{p}_1) + \frac{1}{p_0^2} \var(\widehat{p}_0)
 - \frac{2}{p_1 p_0} \cov(\widehat{p}_1, \widehat{p}_0)  \\
 &=& \frac{1}{  p_1^2}   \frac{N_0}{  N_1 N} S_1^2 + \frac{1}{p_0^2}   \frac{N_1}{  N_0 N} S_0^2
 +  \frac{1}{p_1 p_0 N }   (S_1^2 + S_0^2 - S_\tau^2) \\
 &=& \frac{  N_1 p_1 + N_0 p_0}{   p_1^2 p_0 N_1 N } S_1^2
 + \frac{N_1 p_1 + N_0 p_0}{  p_1 p_0^2 N_0 N } S_0^2
 - \frac{1}{p_1 p_0 N} S_\tau^2 \\
 &=& \frac{N_1 p_1 + N_0 p_0  }{p_1 p_0 N} \left(
\frac{S_1^2}{N_1 p_1}
+
\frac{S_0^2 }{N_0 p_0}
- \frac{ S_\tau^2}{N_1 p_1 + N_0 p_0 }
\right).
\end{eqnarray*}
Assume $S_\tau^2 = 0$, and we can estimate the asymptotic variance by
\begin{eqnarray}
\widehat{V}_{\CRR} &=&
\frac{  N_1 \widehat{p}_1 + N_0 \widehat{p}_0}{   \widehat{p}_1^2 \widehat{p}_0 N_1 N } s_1^2
 + \frac{N_1 \widehat{p}_1 + N_0 \widehat{p}_0}{  \widehat{p}_1 \widehat{p}_0^2 N_0 N } s_0^2 \nonumber   \\
 &=& \frac{  N_1 \widehat{p}_1 + N_0 \widehat{p}_0}{   \widehat{p}_1^2 \widehat{p}_0 N_1 N } \frac{N_1}{N_1 - 1} \widehat{p}_1 (1 - \widehat{p}_1)
 + \frac{N_1 \widehat{p}_1 + N_0 \widehat{p}_0}{  \widehat{p}_1 \widehat{p}_0^2 N_0 N } \frac{N_0}{N_0 - 1}  \widehat{p}_0 (1 - \widehat{p}_0) \nonumber   \\
 &=& \frac{  (  N_1 \widehat{p}_1 + N_0 \widehat{p}_0)(1 - \widehat{p}_1) }{   \widehat{p}_1 \widehat{p}_0 (N_1 - 1) N }
 + \frac{  (N_1 \widehat{p}_1 + N_0 \widehat{p}_0 ) (1 - \widehat{p}_0)  }{  \widehat{p}_1 \widehat{p}_0 (N_0 - 1) N } \nonumber   \\
 &=& \frac{n_{10}}{n_{11} (N_1 - 1)} \frac{( n_{11} + n_{01})N_0 }{ n_{01} N} + \frac{n_{00}}{n_{01} (N_0 - 1)  }  \frac{(n_{11} + n_{01})  N_1 }{  n_{11} N }.
\end{eqnarray}
Ignoring the difference between $N_w$ and $(N_w-1)$ ($w=0,1$) in asymptotic analysis, we obtain the formula in Theorem 3.
\end{proof}

\begin{proof}
[Proof of Theorem 4]
Applying Taylor expansion, we have
\begin{eqnarray*}
 \log( \widehat{\COR}) - \log (\COR)
 = \frac{1}{p_1(1 - p_1)} (\widehat{p}_1 - p_1) 
 - \frac{1}{p_0 (1 - p_0)} (\widehat{p}_0 - p_0) 
 + o_p\left(  \frac{1}{N^{1/2}} \right).
\end{eqnarray*}
According to Lemma \ref{lemma:vcov}, the asymptotic variance of $  \log( \widehat{\COR} )$ is
\begin{eqnarray*}
&&  \frac{1}{p_1^2(1 - p_1)^2} \var( \widehat{p}_1)
+  \frac{1}{p_0^2 (1 - p_0)^2} \var(\widehat{p}_0)
 - \frac{2}{p_1 (1 - p_1) p_0  (1 - p_0) } \cov(\widehat{p}_1, \widehat{p}_0)   \\
 &=&  \frac{1}{p_1^2(1 - p_1)^2}  \frac{N_0}{  N_1 N} S_1^2
 + \frac{1}{p_0^2 (1 - p_0)^2}  \frac{N_1}{  N_0 N} S_0^2
 + \frac{1}{p_1 (1 - p_1) p_0  (1 - p_0) N }   (S_1^2 + S_0^2 - S_\tau^2) \\
 &=& \frac{   N_1 p_1 (1 - p_1) + N_0 p_0 ( 1 - p_0)  }{   p_1^2 (1 - p_1)^2 p_0 ( 1 - p_0) N N_1  }  S_1^2
 + \frac{N_1 p_1 (1 - p_1) + N_0 p_0 ( 1 - p_0) }{   p_1 (1 - p_1) p_0^2 (1 - p_0)^2 N N_0 } S_0^2
 - \frac{1}{  N p_1 (1 - p_1) p_0 (1 - p_0)  } S^2_\tau \\
 &=& \frac{N_1 p_1 (1 - p_1) + N_0 p_0 (1 - p_0)  }{Np_1(1-p_1)p_0 (1 - p_0)} \left\{
\frac{ S_1^2    }{N_1 p_1 (1 - p_1)}   +
\frac{S_0^2}{N_0 p_0 (1 - p_0)}
- \frac{ S_\tau^2}{N_1 p_1 (1 - p_1) + N_0 p_0 (1 - p_0)}
\right\}.
\end{eqnarray*}
The Neyman-type ``conservative'' variance estimator for the asymptotic variance is
\begin{eqnarray}
\widehat{V}_{\COR}&=&
\frac{   N_1 \widehat{p}_1 (1 -  \widehat{p}_1) + N_0  \widehat{p}_0 ( 1 -  \widehat{p}_0)  }{    \widehat{p}_1^2 (1 - \widehat{ p}_1)^2  \widehat{p}_0 ( 1 -  \widehat{p}_0) N N_1 } s_1^2
 + \frac{N_1  \widehat{p}_1 (1 - \widehat{ p}_1) + N_0  \widehat{p}_0 ( 1 -  \widehat{p}_0) }{   \widehat{ p}_1 (1 -  \widehat{p}_1)  \widehat{p}_0^2 (1 -  \widehat{p}_0)^2 N N_0 } s_0^2 \nonumber \\
 &=&
\frac{   N_1 \widehat{p}_1 (1 -  \widehat{p}_1) + N_0  \widehat{p}_0 ( 1 -  \widehat{p}_0)  }{    \widehat{p}_1 (1 - \widehat{ p}_1)  \widehat{p}_0 ( 1 -  \widehat{p}_0) N (N_1 - 1)  }
 + \frac{N_1  \widehat{p}_1 (1 - \widehat{ p}_1) + N_0  \widehat{p}_0 ( 1 -  \widehat{p}_0) }{   \widehat{ p}_1 (1 -  \widehat{p}_1)  \widehat{p}_0 (1 -  \widehat{p}_0) N (N_0 - 1) } \nonumber  \\
&\approx &\frac{   N_1 \widehat{p}_1 (1 -  \widehat{p}_1) + N_0  \widehat{p}_0 ( 1 -  \widehat{p}_0)  }{    \widehat{p}_1 (1 - \widehat{ p}_1)  \widehat{p}_0 ( 1 -  \widehat{p}_0)  } \left(  \frac{1}{NN_1} + \frac{1}{N N_0}  \right)\nonumber \\
&=& \frac{  (n_{01} + n_{00} ) n_{11} n_{10}   + (n_{11} + n_{10} )  n_{01} n_{00} }{   n_{11} n_{10} n_{01} n_{00}  }\nonumber  \\
&=& \frac{1}{n_{11}} + \frac{1}{n_{10}} + \frac{1}{n_{01}} + \frac{1}{n_{00}},
   \end{eqnarray}
where the approximation is due to the difference between $N_w-1$ and $N_w$ for $w=0,1.$
\end{proof}

\section{\bf Proofs for the Results in Appendix A about Bias and Variance Reduction for Nonlinear Causal Measures}

\begin{proof}
[Proof of the Result for $\log(\CRR)$]
Applying Taylor expansion, we have
\begin{eqnarray*}
 \log (\widehat{\CRR} )- \log (\CRR)  = \frac{1}{p_1 } (\widehat{p}_1 - p_1)  - \frac{1}{2p_1^2} (\widehat{p}_1 - p_1)^2
 - \frac{1}{p_0 } (\widehat{p}_0 - p_0) + \frac{1}{2p_0^2} (\widehat{p}_0 - p_0)^2 +  o_p\left(  \frac{1}{N}   \right).
\end{eqnarray*}
Therefore, the asymptotic bias of $  \log (\widehat{\CRR} )$ is
\begin{eqnarray*}
 - \frac{1}{2p_1^2}  \var(\widehat{p}_1) + \frac{1}{2p_0^2} \var(\widehat{p}_0)
 =  - \frac{  N_0}{ 2 p_1^2 N_1 N}   S_1^2 + \frac{N_1}{ 2 p_0^2 N_0 N }S_0^2,
\end{eqnarray*}
and the bias-corrected estimator for $\log (\CRR)$ in Appendix A can be obtained by subtracting the estimated asymptotic bias from $  \log (\widehat{\CRR} )$.
\end{proof}

\begin{proof}
[Proof of the Result for $\log(\COR)$]
Applying Taylor expansion, we have
\begin{eqnarray*}
&& \log( \widehat{\COR}) - \log (\COR)  \\
 &=& \frac{1}{p_1(1 - p_1)} (\widehat{p}_1 - p_1)  - \frac{1 - 2p_1}{2p_1^2 (1 - p_1)^2 } (\widehat{p}_1 - p_1)^2 \\
&& - \frac{1}{p_0 (1 - p_0)} (\widehat{p}_0 - p_0) + \frac{1 - 2p_0}{2p_0^2(1 - p_0)^2} (\widehat{p}_0 - p_0)^2 +   o_p\left(  \frac{1}{N}   \right) .
\end{eqnarray*}
Therefore, the asymptotic bias of $  \log( \widehat{\COR} )$ is
\begin{eqnarray*}
- \frac{1 - 2p_1}{2p_1^2 (1 - p_1)^2 } \var(\widehat{p}_1)
+ \frac{1 - 2p_0}{2p_0^2(1 - p_0)^2} \var(\widehat{p}_0)  
=  - \frac{1 - 2p_1}{2p_1^2 (1 - p_1)^2 }  \frac{N_0}{  N_1 N} S_1^2
+ \frac{1 - 2p_0}{2p_0^2(1 - p_0)^2} \frac{N_1}{N_0 N} S_0^2 ,
\end{eqnarray*}
and the bias-corrected estimator for $\log (\COR)$ in Appendix A can be obtained by subtracting the estimated asymptotic bias from $  \log( \widehat{\COR} )$.
\end{proof}

\section{\bf More Simulation Studies}

In order to compare the finite sample properties of Neyman's original method, the modified Neyman's method, and the Bayesian method, we conduct the following set of simulation studies.
In the main text, we choose the following two sets of potential outcomes: the first set of potential outcomes $(N_{11}, N_{10}, N_{01}, N_{00})$ are independent: $ (50,50,50,50)$, $(30,70,30,70)$, $(30,90,20,60)$, $(80,20,80,20)$, $(60,20,90,30)$; the second set of potential outcomes are positively associated: $(60,40,40,60)$, $(50,50,30,70)$, $(50,70,30,50)$, $(40,110,10,40)$, $(70,30,50,50)$, $(50,30,70,50)$, $(30,10,110,50)$. In addition, in this Supplementary Materials, we also choose negatively associated potential outcomes: $ (40,60,60,40)$, $(30,70,50,50)$, $(40,80,40,40)$, $(30,120,20,30)$, $(50,50,70,30)$, $(40,40,80,40)$, $(20,20,120,40)$. We summarize the ``Science'' in Table \ref{tb::science-appendix}, where Cases 1--5 represent independent potential outcomes, Cases 6--12 represent positively associated potential outcomes, and Cases 13--19 represent negatively associated potential outcomes.

\begin{table}[ht]
\centering
\caption{``Science table'' for the simulation studies}\label{tb::science-appendix}
\begin{tabular}{|c|cccc|cccc|ccc|}
\hline
Case & $N_{11}$ & $N_{10}$ & $N_{01}$ & $N_{00}$ & $S_1^2$ & $S_0^2$ & $S_{10}$ &  $S_\tau^2$ & $\tau$ & $\log(\CRR)$ & $\log(\COR)$ \\
\hline 
1&50&50&50&50&0.251 & 0.251 & 0.000 & 0.503 & 0.000 & 0.000 & 0.000 \\ 
2&30&70&30&70&0.251 & 0.211 & 0.000 & 0.462 & 0.200 & 0.511 & 0.847 \\ 
3&30&90&20&60&0.241 & 0.188 & 0.000 & 0.430 & 0.350 & 0.875 & 1.504 \\ 
4&80&20&80&20&0.251 & 0.161 & 0.000 & 0.412 & -0.300 & -0.470 & -1.386 \\ 
5&60&20&90&30&0.241 & 0.188 & 0.000 & 0.430 & -0.350 & -0.629 & -1.504 \\ 
\hline 
6&60&40&40&60&0.251 & 0.251 & 0.050 & 0.402 & 0.000 & 0.000 & 0.000 \\ 
7&50&50&30&70&0.251 & 0.241 & 0.050 & 0.392 & 0.100 & 0.223 & 0.405 \\ 
8&50&70&30&50& 0.241 & 0.241 & 0.010 & 0.462 & 0.200 & 0.405 & 0.811 \\ 
9&40&110&10&40&0.188 & 0.188 & 0.013 & 0.352 & 0.500 & 1.099 & 2.197 \\ 
10&70&30&50&50&0.251 & 0.241 & 0.050 & 0.392 & -0.100 & -0.182 & -0.405 \\ 
11&50&30&70&50&0.241 & 0.241 & 0.010 & 0.462 & -0.200 & -0.405 & -0.811 \\ 
12&30&10&110&50& 0.161 & 0.211 & 0.010 & 0.352 & -0.500 & -1.253 & -2.234 \\  
\hline  
13&40&60&60&40&0.251 & 0.251 & -0.050 & 0.603 & 0.000 & 0.000 & 0.000 \\ 
14&30&70&50&50&0.251 & 0.241 & -0.050 & 0.593 & 0.100 & 0.223 & 0.405 \\ 
15&40&80&40&40& 0.241 & 0.241 & -0.040 & 0.563 & 0.200 & 0.405 & 0.811 \\
16&30&120&20&30&0.188 & 0.188 & -0.038 & 0.452 & 0.500 & 1.099 & 2.197 \\ 
17&50&50&70&30&0.251 & 0.241 & -0.050 & 0.593 & -0.100 & -0.182 & -0.405 \\ 
18&40&40&80&40&0.241 & 0.241 & -0.040 & 0.563 & -0.200 & -0.405 & -0.811 \\ 
19&20&20&120&40&0.161 & 0.211 & -0.040 & 0.452 & -0.500 & -1.253 & -2.234 \\  
\hline 
\end{tabular}
\end{table}

For given potential outcomes, we draw, repeatedly and independently, the treatment assignment vectors $5000$ times, and apply the three methods after obtaining the observed outcomes.
We compare three methods: Neymanian inference assuming constant treatment effects, improved Neymanian inference, and Bayesian inference assuming independent potential outcomes.

The results are summarized in Figure \ref{fg::independent}, \ref{fg::positive} and \ref{fg::negative}, with average biases, average lengths of the $95\%$ confidence/credible intervals, and the coverage probabilities. The main text only reports the results for $\CRD$ and $\log(\COR)$, here we report the results for all causal measures.
When the potential outcomes are independent or positively associated,
the results for $\log(\CRR)$ are similar to those for $\log(\COR)$ as discussed in the main text.
When the potential outcomes are negatively associated, all the interval estimates over cover the true causal measures, while the Bayesian credible intervals are the narrowest.

\begin{figure}[ht]
\centering
\includegraphics[width = \textwidth]{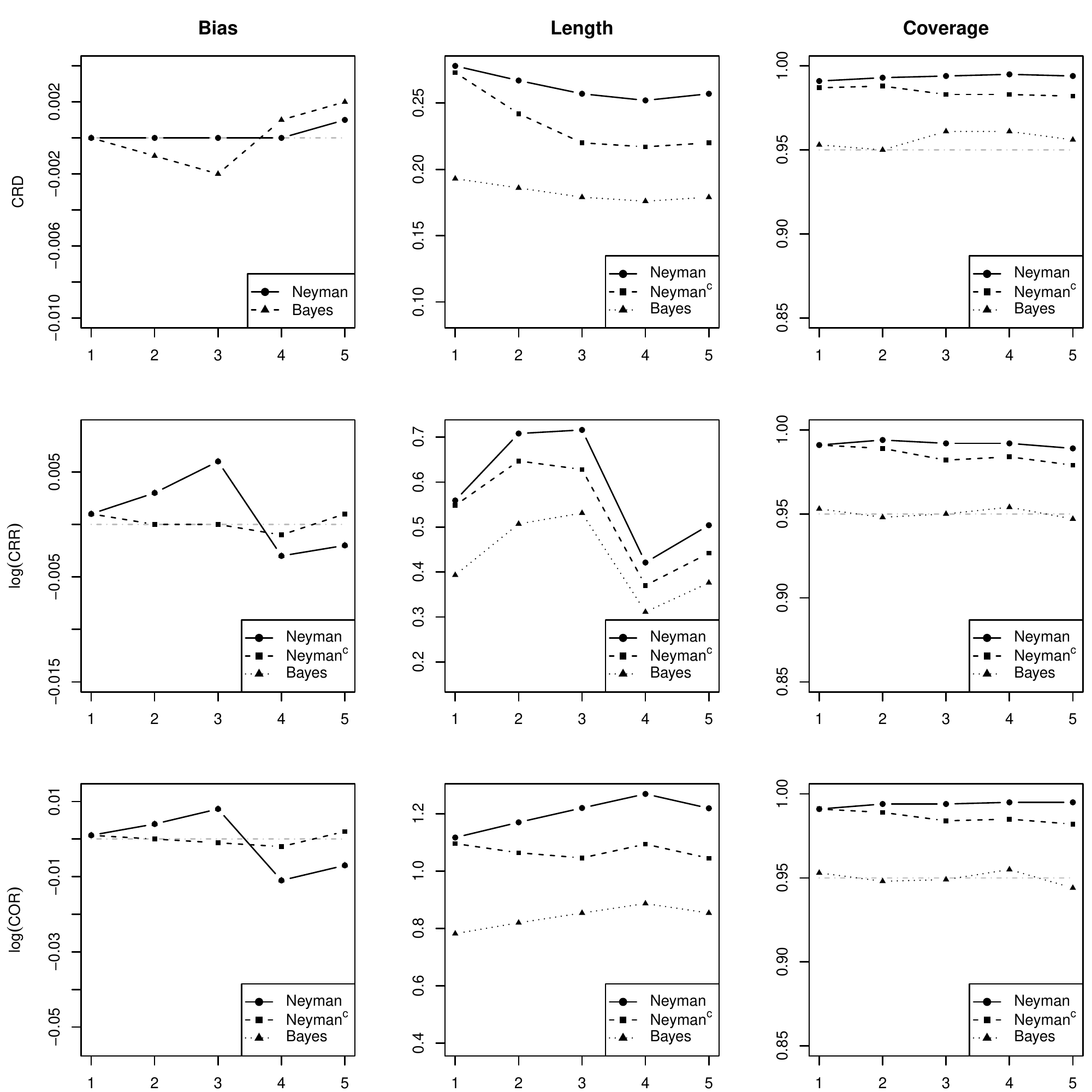}
\caption{Simulation Results for Independent Potential Outcomes. Each subfigure is a $2\times 3$ matrix summarizing $3$ repeated sampling properties (average bias, average length, and coverage of interval estimates) for $2$ causal measures.
Note that ``Neyman'' and ``Bayes'' are indistinguishable for biases of $\log(\CRR)$ and $\log(\COR)$.
} \label{fg::independent}
\end{figure}

\begin{figure}[ht]
\centering
\includegraphics[width = \textwidth]{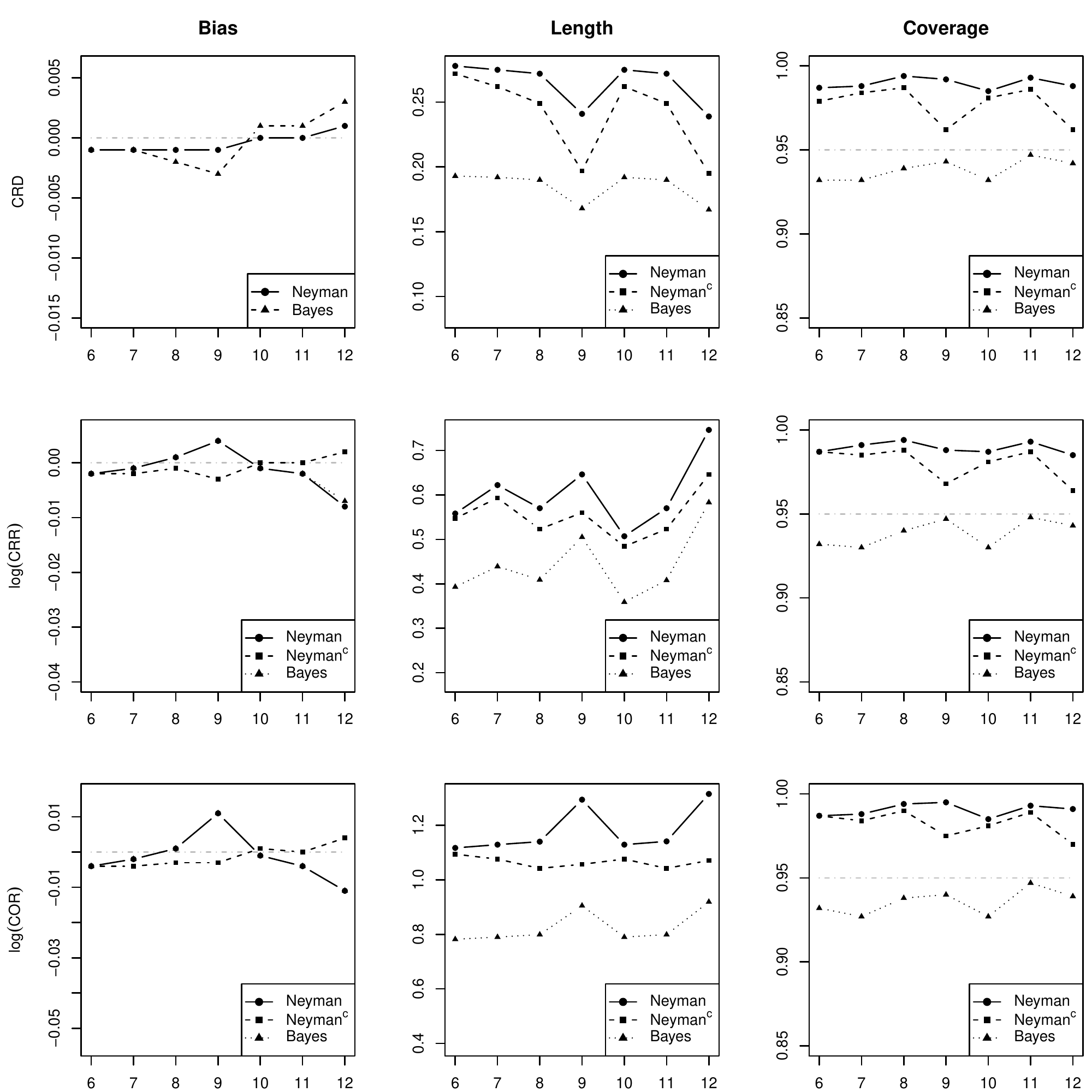}
\caption{Simulation Results for Positively Associated Potential Outcomes. Each subfigure is a $2\times 3$ matrix summarizing $3$ repeated sampling properties (average bias, average length, and coverage of interval estimates) for $2$ causal measures. Note that ``Neyman'' and ``Bayes'' are indistinguishable for biases of $\log(\CRR)$ and $\log(\COR)$.
} \label{fg::positive}
\end{figure}

\begin{figure}[ht]
\centering
\includegraphics[width = \textwidth]{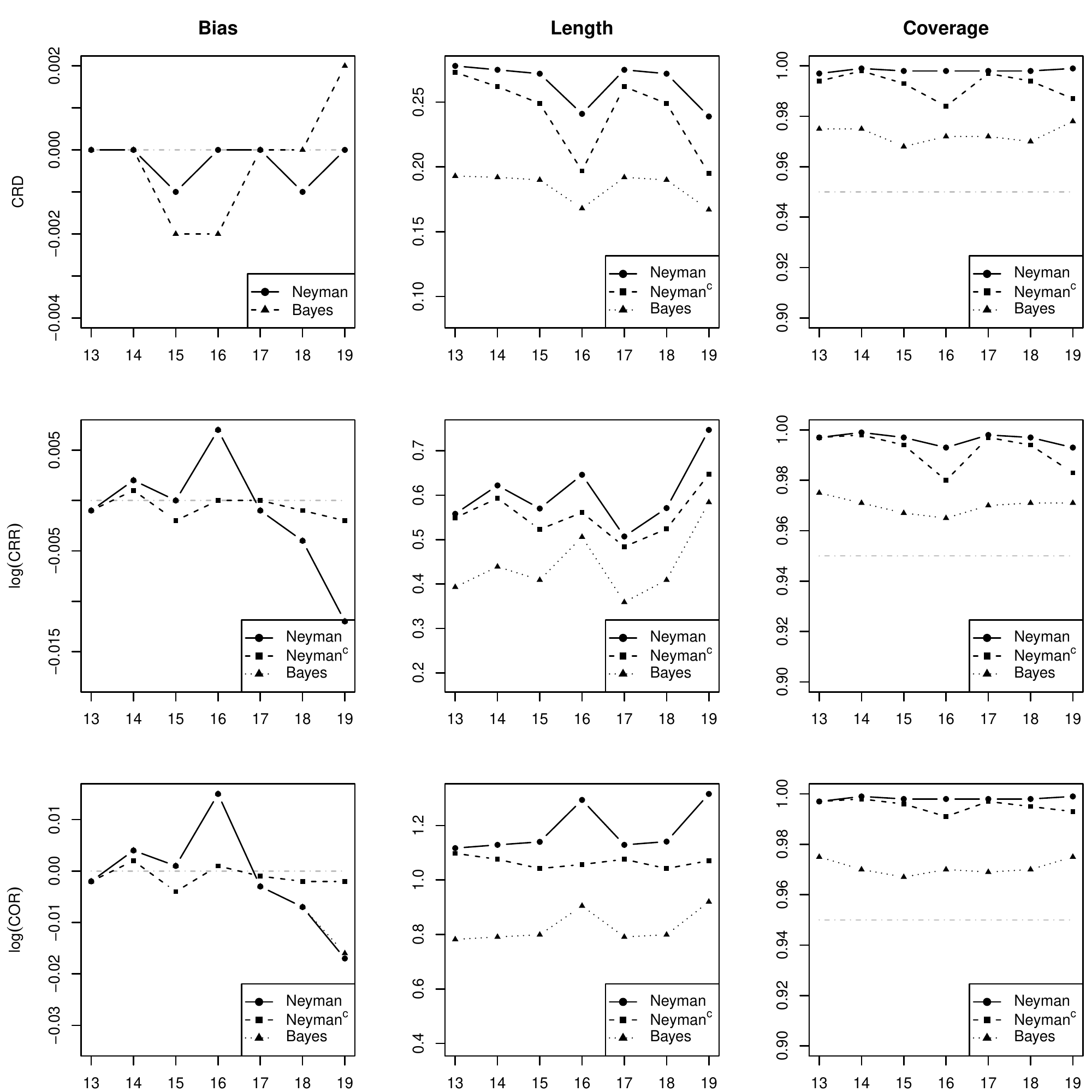}
\caption{Simulation Results for Negatively Associated Potential Outcomes. Each subfigure is a $2\times 3$ matrix summarizing $3$ repeated sampling properties (average bias, average length, and coverage of interval estimates) for $2$ causal measures. Note that ``Neyman'' and ``Bayes'' are indistinguishable for biases of $\log(\CRR)$ and $\log(\COR)$.
} \label{fg::negative}
\end{figure}

\section{\bf  More Details about the Application}

As in the main text, the example is taken from Bissler et al. (2013), and they compare the rate of adverse events in the treatment group versus the control group. The adverse event naspharyngitis occurred in $19$ among $79$ subjects in the treatment group with everolimus, and it occurred in $12$ among $39$ subjects in the control group. Therefore, the $2\times 2$ table representing the observed data has cell counts $(n_{11}, n_{10}, n_{01}, n_{00}) = (19,60,12,27).$
Figure \ref{fg::sense1} shows the sensitivity analysis for $\CRD, \log(\CRR)$ and $\log(\COR)$, with similar patterns for all of them.

\begin{figure}[ht]
\centering
\includegraphics[width = \textwidth]{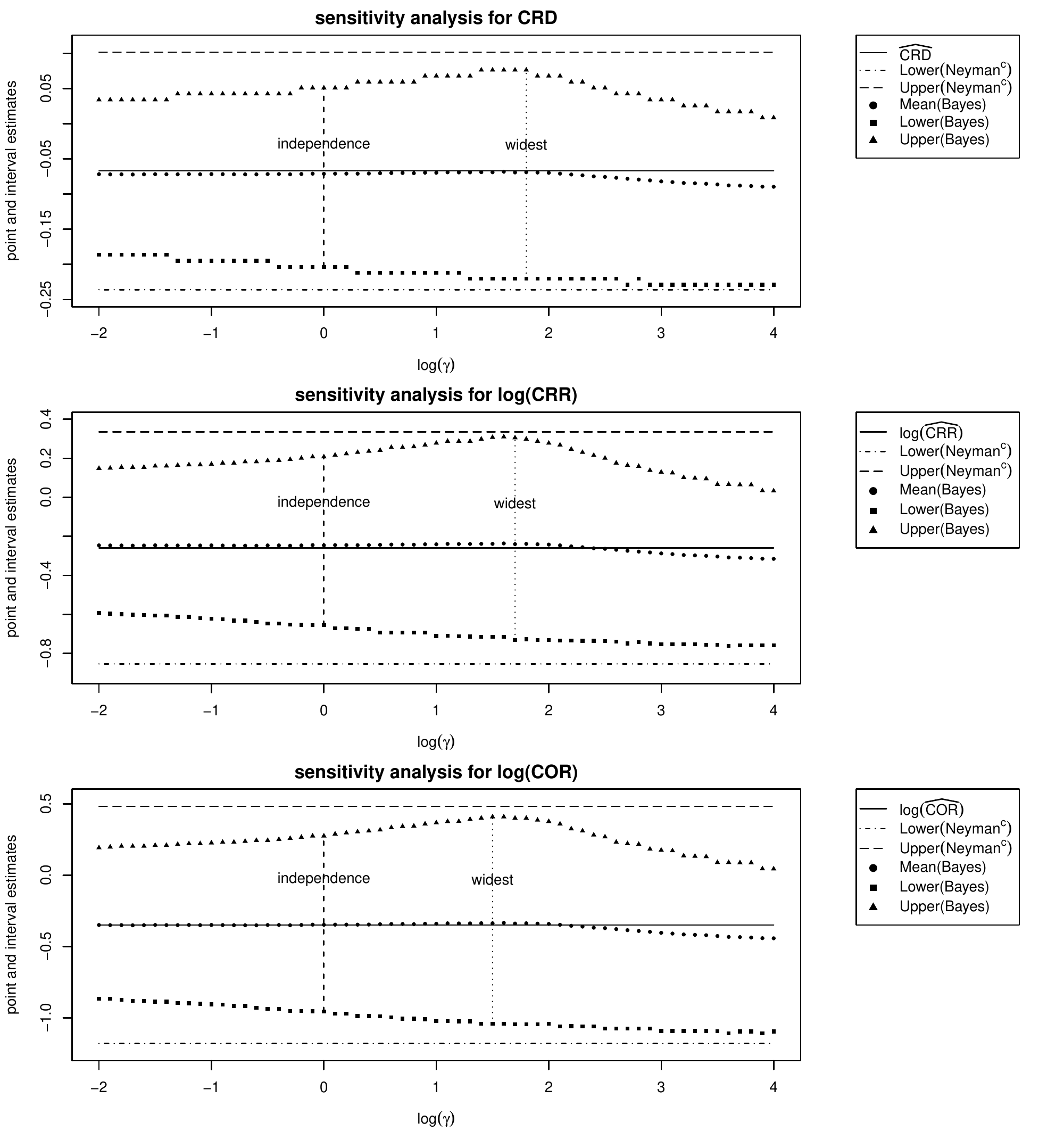}
\caption{Bayesian Sensitivity Analysis of the Trial with $(n_{11}, n_{10}, n_{01}, n_{00}) =  (19,60,12,27)$.
Three panels are for $\CRD, \log(\CRR)$, and $\log(\COR)$, respectively.
The intervals named ``independence'' are the $95\%$ posterior credible intervals under independence of the potential outcomes,  and the intervals named ``widest'' are the widest $95\%$ credible intervals over the ranges of the sensitivity parameters.
} \label{fg::sense1}
\end{figure}

\end{document}